\newtheorem{theorem}{Theorem}
\newtheorem{lemma}[theorem]{Lemma}
\newtheorem{proposition}[theorem]{Proposition}
\newtheorem{definition}[theorem]{Definition}
\newtheorem{corollary}[theorem]{Corollary}
\newtheorem{example}[theorem]{Example}
\newtheorem{remark}[theorem]{Remark}
\newcommand{\R}{\mathbb{R}}
\newcommand{\T}{\mathbb{T}}
\newcommand{\C}{\mathbb{C}}
\newcommand{\Z}{\mathbb{Z}}
\newcommand{\RR}{\mathbf{R}}
\newcommand{\PP}{\mathcal{P}}
\newcommand{\Hil}{\mathscr{H}}
\newcommand{\id}{\textnormal{e}}
\newcommand{\dg}{d_{\wh{G}}}
\newcommand{\dr}{d_{\wh{\RR}}}
\newcommand{\dalpha}{\wh{\alpha}}
\newcommand{\one}[1]{\mathlarger{\chi}_{\raisebox{-.5ex}{$\mathsmaller{#1}$}}}
\newcommand{\wh}[1]{\widehat{#1}}
\newcommand{\ol}[1]{\overline{#1}}
\newcommand{\ul}[1]{\underline{#1}}
\begin{document}
\title{Calder\'on-type inequalities for affine frames}

\author{Davide Barbieri, Eugenio Hern\'andez, Azita Mayeli}

\date{\today}
\maketitle

\vspace{-3.5ex}
\begin{abstract}
We prove sharp upper and lower bounds for generalized Calder\'on's sums associated to frames on LCA groups generated by affine actions of cocompact subgroup translations and general measurable families of automorphisms. 
The proof makes use of techniques of analysis on metric spaces, and relies on a counting estimate of lattice points inside metric balls. We will deduce as special cases Calder\'on-type inequalities for families of expanding automorphisms as well as for LCA-Gabor systems.
\end{abstract}

\vspace{-2.5ex}

{
\hypersetup{linkcolor=black}
\setcounter{tocdepth}{2}
\tableofcontents
\clearpage
}

\section{Introduction}

If $\psi \in L^2(\R)$ and $a \in \R_+ \setminus \{1\}$, several well-known conditions for discrete wavelets systems are given in terms of the Calder\'on's sum \[\mathcal{C}_\psi(\xi) = \sum_{j \in Z} |\wh{\psi}(a^j\xi)|^2.\] A necessary condition for an affine system $\Psi = \{a^{\frac{j}{2}}\psi(a^j \cdot - k)\}_{j,k \in Z}$ to be an orthonormal basis of $L^2(\R)$ (see \cite{Gripenberg}, \cite{Wang}) is
\[
\mathcal{C}_\psi(\xi) = 1 \quad \textnormal{a.e.} \ \xi \in \R
\]
which is also sufficient to prove that the system is complete once it is known to be orthonormal (see \cite{Rzeszotnik}, \cite{Bownik}). On the other hand, a necessary condition for $\Psi$ to be a frame with constants $A$ and $B$ is given by (see \cite{ChuiShi})
\begin{equation}\label{eq:Calderon1}
A \leq \mathcal{C}_\psi(\xi) \leq B \quad \textnormal{a.e.} \ \xi \in \R .
\end{equation}
A consequence of (\ref{eq:Calderon1}) is that the so-called coaffine systems $\{a^{\frac{j}{2}}\psi(a^j (\cdot - k))\}_{j,k \in Z}$ can not form a frame of $L^2(\R)$ (see \cite{GLWW}).
Let us also recall that the Calder\'on's sum is actually related to the well-known Calder\'on's admissibility condition characterizing continuous wavelets (see e.g. \cite{WeissWilson}).

The results on orthonormal wavelets on $\R$ have been extended to several degrees of generality, including characterizations of tight frames for generalized shift-invariant systems on $\R^n$ \cite{HLW}, and on LCA groups \cite{KutyniokLabate}, and more recently these results were obtained under weaker hypotheses in a setting that unifies discrete and continuous systems \cite{JakobsenLemvigTranslations}. The setting of general frames is much less studied, but we note that very recently, a closely related result to the one discussed in the present paper has been obtained in \cite[Theorem 6.2]{FuhrLemvig} with completely different techniques.

\

In this paper we consider frames in subspaces of $L^2(G)$, where $G$ is an LCA group, generated by translations by a cocompact subgroup $\Gamma$ and by a family of automorphisms, endowed with a Borel measure, that formally replaces dilations. The generality of the setting also allows us to consider modulations, hence including Gabor-type systems, after a proper choice of automorphisms and subspaces of $L^2(G)$, see Section \ref{sec:Gabor}. Our main requirements on these objects are that the dual group $\wh{G}$ has an invariant metric that satisfies the Lebesgue's differentiation theorem, and that the family of automorphisms be bi-Lipschitz, with measurable upper and lower constants, with respect to such an invariant distance.

The main result of this paper is Theorem \ref{theo:main}, which proves that affine frames in this setting satisfy inequalities that generalize (\ref{eq:Calderon1}). The proof relies on a variant of a classical strategy which consists of testing the frame condition on a family of functions whose Fourier transforms define an approximation of the identity, and then obtain the inequalities in the limit by Lebesgue's differentiation theorem. This strategy can be performed whenever two crucial hypotheses are met.

\

The first hypothesis is a counting estimate, that requires the number of points of the discrete annihilator $\Gamma^\bot$ inside the dilation of small $\wh{G}$ balls to grow at most as much as the jacobian of the dilation. This condition, that we call Property X, see Definition \ref{def:PropertyX}, is similar to the lattice counting estimate studied recently in \cite{BownikLemvig} for automorphisms given by powers of a matrix. The lattice counting estimate appeared in previous works, notably \cite{HLW}, \cite{GuoLabate}, \cite{KutyniokLabate}, and in \cite{BownikLemvig} the authors prove that the estimate is equivalent to the automorphisms to be expanding on a subspace (and not contracting on the complementary). In the present paper, in Definition \ref{def:expanding}, we introduce a general notion of expansiveness, which generalizes that of expanding on a subspace to all bi-Lipschitz families of automorphisms, see Proposition \ref{prop:expandingsubspace}. We then prove in Theorem \ref{theo:expandingX} that this notion of expansiveness is actually sufficient to obtain Property X. The proof relies on a technical counting estimate, given by Lemma \ref{lem:counting}, which holds for any discrete subgroup of an LCA group and that is actually optimal, as shown in Lemma \ref{lem:optimal}. However, we can prove Property X also in nonexpanding settings. In Section \ref{sec:examples} we indeed obtain Property X for shearlets, which are systems of wavelets with composite dilations that are not expanding, and in Lemma \ref{lem:GaborX} we can obtain it for Gabor systems, after an appropriate adjustment of the involved Hilbert space.

\

The second hypothesis which is required for the proof of Theorem \ref{theo:main} is a local integrability condition for the mother wavelet. This requirement appears naturally when applying Lebesgue's differentiation, since it provides the local integrability of one of the terms of the estimate, actually a remainder which does vanish. Essentially the same hypothesis but in a slightly different setting, that of generalized shift-invariant systems on $\R$, was thoroughly discussed in the recent paper \cite{ChristensenHasannasabLemvig}, and it was related to the so-called LIC and $\alpha$-LIC conditions introduced respectively in \cite{HLW} and in \cite{JakobsenLemvigTranslations}. Here, in Theorem \ref{theo:sufficient}, we provide sufficient conditions on the family of automorphisms, in terms of a slightly stronger notion of expansiveness, that makes this local integrability condition to hold automatically for all mother wavelets. This condition is not necessary, since for example in the nonexpanding case of Gabor systems we get local integrability for free. However, it may be interesting to observe that essentially the same idea of the proof was used in \cite{ChristensenHasannasabLemvig} to obtain this local integrability for all wavelets on $\R$ (which satisfy a special case of the mentioned stronger expansiveness), and in our opinion the core of this argument can be found also in \cite{ChuiShi}.

\

Once Theorem \ref{theo:main} is established, we can then obtain Calder\'on's inequalities for expanding automorphisms in  Corollary \ref{cor:expandingCalderon}, as well as for Gabor systems in Theorem \ref{theo:GaborCalderon}, by showing that Property X holds.

\

We now compare our results with similar ones that have appeared in the literature on the subject.

In \cite{JakobsenLemvigTranslations} the authors work in a different setting and prove the upper bound of the Calder\'on sum for Bessel sequences, or treat the case of dual frames. We treat the case of general frames to find the lower bound, and allow for continuous families of automorphism.

In \cite{JakobsenLemvigGabor} the authors consider Gabor systems, and in this setting their assumptions are more general than ours, while our results apply to other systems as well. They prove the upper and lower bound for the Calder\'on sum for Gabor systems. Our Theorem \ref{theo:GaborCalderon} proves the same result in a setting that is less general with respect to Gabor systems, but using completely different techniques that apply to more general systems.

In \cite{BownikLemvig} the authors work in $\R^n$, where they obtain the Calder\'on condition for Parseval frames and dual frames. They study several issues concerning a counting estimate similar to Property X, and they prove that it is equivalent to the expanding on a subspace condition. We treat general second countable LCA groups and general families of automorphisms, with a definition of expanding that implies Property X, but is not equivalent although it includes expanding on a subspace matrices.

In \cite{HLW} the authors work in $\R^n$, they prove the upper bound for the Calder\'on sum for Bessel sequences, and they obtain the Calder\'on condition for Parseval frames or dual frames.

\vspace{3ex}
\paragraph{Acknowledgements:}
This project has received funding from the European Union’s Horizon 2020 research and innovation programme under the Marie Skłodowska-Curie grant agreement No 777822. D. Barbieri and E. Hern\'andez were supported by Grants MTM2016-76566-P (MINECO, Spain).  A. Mayeli was supported by PSC-CUNY grant 60623-00 48.\\
The authors would like to thank Jakob Lemvig for interesting and helpful comments on a previous version of this paper.

\paragraph{Keywords:} Frames in LCA groups; Calder\'on condition for frames; Gabor systems.
\newpage

\section{Preliminaries}\label{sec:preliminaries}

Let $G$ be a locally compact and second countable abelian group, and let us denote with $+$ its composition law. Let
\[
\wh{f}(\xi) = \int_G f(x) \ol{\langle \xi,x\rangle} d\mu_G(x)
\]
be the $G$-Fourier transform, where we denote by $\langle \xi,x\rangle \in \T$ the pairing with a character $\xi \in \wh{G}$.

Let $\Gamma$ be a cocompact closed subgroup of $G$. Let us fix a Haar measure $\mu_G$ on $G$ and, since $G/\Gamma$ is compact, let us fix the normalized measure $\kappa$ on $G/\Gamma$, i.e. such that $\kappa(G/\Gamma) = 1$.

Let us then set a Haar measure $\mu_\Gamma$ on $\Gamma$ in such a way that the corresponding Weil's formula holds with constant 1:
\[
\int_G f(x) d\mu_G(x) = \int_\Gamma \big(\int_{G/\Gamma} f(y+\gamma) d\kappa(y)\big) d\mu_\Gamma(\gamma).
\]

Let $\Gamma^\perp = \{\lambda \in \wh{G} \, : \, \langle \lambda,\gamma\rangle = 1 \ \forall \, \gamma \in \Gamma\}$ denote the annihilator of $\Gamma$, which is a closed subgroup of $\wh{G}$. Observe that, since $G/\Gamma$ is compact, then $\Gamma^\perp \approx \wh{(G/\Gamma)}$ is discrete, so it is countable because $\wh{G}$ is second countable.

We will make use of the following standard result relating the multiplicative constants of Haar measures with Plancherel theorems on subgroups. It was originally given by Weil \cite{Weil}, a proof can be found also in \cite[(31.46), (c)]{HewittRossII}, and the argument to obtain it is reported in Appendix \ref{sec:Weil}.
\begin{lemma}\label{lem:Weil}
Let $(G,\mu_G)$ and $(\Gamma,\mu_\Gamma)$ be as above, and let $\nu_{\wh{G}}$ be the Haar measure on $\wh{G}$ that makes the $G$-Fourier transform a unitary operator from $L^2(G)$ to $L^2(\wh{G})$, i.e.
\[
\int_G |f(x)|^2 d\mu_G(x) = \int_{\wh{G}} |\wh{f}(\xi)|^2 d\nu_{\wh{G}}(\xi) \qquad \forall \ f \in L^2(G) .
\]
Let $\Omega \subset \wh{G}$ be any $\nu_{\wh{G}}$-measurable section of $\wh{G}/\Gamma^\perp$, i.e. a fundamental set.

Then $\nu_{\wh{G}}(\Omega) > 0$ and the following identities hold
\begin{align*}
i. & \int_{\wh{G}} \phi(\xi) d\nu_{\wh{G}}(\xi) = \sum_{\lambda \in \Gamma^\perp} \int_\Omega \phi(\xi + \lambda) d\nu_{\wh{G}}(\xi) \qquad \forall \ \phi \in L^1(\wh{G}) .\\
ii. & \int_\Omega |\phi(\xi)|^2 d\nu_{\wh{G}}(\xi) = \int_\Gamma |\int_\Omega \phi(\xi) \langle \gamma, \xi \rangle d\nu_{\wh{G}}(\xi)|^2 d\mu_\Gamma(\gamma) \qquad \forall \ \phi \in L^2(\Omega) .
\end{align*}
\end{lemma}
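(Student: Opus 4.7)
The plan is to derive both identities from the interplay of Weil's formula with Pontryagin duality. The guiding observation is that $\wh{G}/\Gamma^\perp$ is naturally isomorphic to $\wh{\Gamma}$, so the measurable fundamental domain $\Omega$ is a concrete model for the compact dual of $\Gamma$, and I expect the restriction $\nu_{\wh{G}}|_\Omega$ to play the role of the Plancherel dual of $\mu_\Gamma$, up to a multiplicative constant that I must verify equals $1$.

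For part $(i)$ I would apply the classical Weil formula to the dual pair $(\wh{G},\Gamma^\perp)$, using the counting measure on the discrete subgroup $\Gamma^\perp$. This produces an identity of the form
\[
\int_{\wh{G}}\phi(\xi)\,d\nu_{\wh{G}}(\xi)=c\sum_{\lambda\in\Gamma^\perp}\int_\Omega\phi(\xi+\lambda)\,d\nu_{\wh{G}}(\xi)
\]
for some constant $c>0$ depending only on the joint normalizations of $\nu_{\wh{G}}$, counting measure on $\Gamma^\perp$, and the Haar measure induced on the quotient $\wh{G}/\Gamma^\perp$. To force $c=1$ I would invoke the self-duality of Weil triples: if $(\mu_G,\mu_\Gamma,\kappa)$ satisfies Weil's formula with constant $1$ and $\kappa$ is a probability measure, and $\nu_{\wh{G}}$ is the Plancherel dual of $\mu_G$, then the dual triple on $(\wh{G},\Gamma^\perp,\wh{\Gamma})$, with counting measure on $\Gamma^\perp$ and the Plancherel dual $\nu_{\wh{\Gamma}}$ of $\mu_\Gamma$ (realized on $\wh{G}/\Gamma^\perp\cong\wh{\Gamma}$ as $\nu_{\wh{G}}|_\Omega$), automatically satisfies Weil's formula with constant $1$. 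This is essentially the content of the classical argument of Weil recalled in the appendix. Positivity $\nu_{\wh{G}}(\Omega)>0$ then follows at once: otherwise the tiling $\wh{G}=\bigcup_{\lambda\in\Gamma^\perp}(\Omega+\lambda)$ would be a countable union of null sets, forcing $\nu_{\wh{G}}(\wh{G})=0$ and contradicting the Haar property.

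For part $(ii)$ I would identify $L^2(\Omega,\nu_{\wh{G}})$ with $L^2(\wh{\Gamma},\nu_{\wh{\Gamma}})$ through the canonical measurable bijection $\Omega\to\wh{G}/\Gamma^\perp\cong\wh{\Gamma}$ induced by the quotient map (noting that characters on $\Gamma$ are the restrictions of characters on $G$ modulo $\Gamma^\perp$). Under this identification, the linear map $\phi\mapsto\bigl(\gamma\mapsto\int_\Omega\phi(\xi)\langle\gamma,\xi\rangle\,d\nu_{\wh{G}}(\xi)\bigr)$ is precisely the $\wh{\Gamma}$-to-$\Gamma$ inverse Fourier transform, and the Plancherel theorem for the dual pair $(\mu_\Gamma,\nu_{\wh{\Gamma}})$ yields the desired identity with constant $1$. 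The main obstacle throughout is the book-keeping of normalizations: the identities themselves are essentially standard, but confirming that the four compatibility conditions $\kappa(G/\Gamma)=1$, Weil constant $1$ for $(\mu_G,\mu_\Gamma,\kappa)$, Plancherel for $(\mu_G,\nu_{\wh{G}})$, and the induced Plancherel for $(\mu_\Gamma,\nu_{\wh{\Gamma}})$ all conspire to give the clean constant $1$ in both parts is what requires the full Pontryagin self-duality of Weil's formula.
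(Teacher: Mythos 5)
Your proposal is correct and follows the same basic strategy as the paper: everything reduces to the Pontryagin duality of Weil's formula (the paper cites Hewitt--Ross (31.46)(c)) together with the bookkeeping that $\kappa(G/\Gamma)=1$ forces the Plancherel dual of $\kappa$ on $\Gamma^\perp$ to be exactly the counting measure. The one place where your route genuinely differs, and where the paper's is both lighter and logically prior, is part $(i)$: the paper obtains it immediately from the definition of a fundamental set, since $\wh{G}$ is, up to a null set, the disjoint union of the translates $\Omega+\lambda$ and $\nu_{\wh{G}}$ is translation invariant, so the constant is $1$ with no appeal to duality at all. The paper then uses this free identity as the anchor that determines the product $c\,c_0=1$ of the two unknown normalization constants (where $\wh{\theta}=c\,\nu_{\wh{G}}|_\Omega$ is the Plancherel dual of $\mu_\Gamma$ and $\wh{\theta}_0=c_0\cdot\mathrm{counting}$ is the Plancherel dual of $\kappa$); combined with $c_0=1$, obtained by testing the $G/\Gamma$ Plancherel identity on $f\equiv 1$, this yields $c=1$ and hence $(ii)$. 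In your version you instead derive $(i)$ from the dual Weil formula, which means you must independently identify the pushforward of $\nu_{\wh{G}}|_\Omega$ to $\wh{G}/\Gamma^\perp$ as the correctly normalized quotient Haar measure --- but that identification \emph{is} the elementary tiling argument, so it should be stated explicitly rather than folded into ``self-duality of Weil triples''; once that is done, the two proofs coincide.
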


\subsection{Counting estimates on metric abelian groups}

Since $\wh{G}$ is locally compact and second countable it is metrizable, its metric can be chosen to be invariant under the group action, and it is complete\footnote{Actually it is proper: closed balls are compact.} (see \cite[Theorem (8.3)]{HewittRossI}, \cite{Struble}). Let us denote with $\dg : \wh{G} \times \wh{G} \to \R^+$ one of these metrics and with
\[
B(\xi_0,r) = \{\xi \in \wh{G} \, : \, \dg(\xi,\xi_0)<r\}
\]
a $\dg$-metric ball in $\wh{G}$ of radius $r > 0$ and center $\xi_0$. Note that, denoting by $\id$ the identity element of $\wh{G}$, by the invariance of the metric we have
\[
B(\xi_0,r) = B(\id,r) + \xi_0 .
\]

In the next sections we will need a counting estimate for the number of the $\Gamma^\perp$ lattice points that lie inside metric balls deformed by an automorphism $\dalpha \in \textnormal{Aut}(\wh{G})$. We will deduce it in some relevant settings from a basic counting lemma that we now present. In order to obtain it, let us introduce the following notation: given $r > 0$, denote by
\[
(\Gamma^\perp)_{\dalpha}^r = \bigcup_{\lambda \in \Gamma^\perp} \big(\dalpha B(\id,r) + \lambda\big) .
\]
Also, for $\Omega$ a fundamental domain for $\Gamma^\perp$ in $\wh{G}$, let us define
\begin{equation}\label{eq:neighborhood}
\Omega_{\dalpha}^r = \Omega \cap (\Gamma^\perp)_{\dalpha}^r.
\end{equation}
Note that $\Omega_{\dalpha}^r$ has a finite measure since, using $i.$, Lemma \ref{lem:Weil}, we get
\begin{align*}
\nu_{\wh{G}}&(\Omega_{\dalpha}^r) = \int_{\Omega \cap (\Gamma^\perp)_{\dalpha}^r} d\nu_{\wh{G}}(\xi) \leq \sum_{\lambda \in \Gamma^\perp} \int_{\Omega \cap \big(\dalpha B(\id,r) + \lambda\big)} d\nu_{\wh{G}}(\xi)\\
& = \sum_{\lambda \in \Gamma^\perp} \int_{\Omega + \lambda} \one{\dalpha B(\id,r)}(\xi) d\nu_{\wh{G}}(\xi) = \int_{\wh{G}} \one{\dalpha B(\id,r)}(\xi) d\nu_{\wh{G}}(\xi) = \nu_{\wh{G}}(\dalpha B(\id,r)).
\end{align*}
We can now state our counting lemma in general terms as follows.
\begin{lemma}\label{lem:counting}
Let $\wh{G}$ be a second countable LCA group with Haar measure $\nu_{\wh{G}}$, let $\Gamma^\perp$ be a discrete subgroup of $\wh{G}$, and let $\dalpha \in \textnormal{Aut}(\wh{G})$. Then
\[
\sharp \Big(\Gamma^\perp \cap \dalpha B(\id,r)\Big) \leq \frac{1}{\nu_{\wh{G}}(\Omega_{\dalpha}^r)}\nu_{\wh{G}}(\dalpha B(\id,2r)) \quad \forall \ r > 0
\]
where $\Omega_{\dalpha}^r$ is as in (\ref{eq:neighborhood}) for any $\Omega \subset \wh{G}$ a $\nu_{\wh{G}}$-measurable section of $\wh{G}/\Gamma^\perp$.
\end{lemma}

\begin{proof}
For all $\xi \in \wh{G}$ and $r > 0$, let us denote by
% \footnote{Note that, by the unimodularity of $\Gamma^\perp$, we can always exchange $\lambda$ with $-\lambda$ in the sum.}
\[
S_{\dalpha}^r(\xi) = \sharp \Big(\Gamma^\perp \cap (\dalpha B(\id,r) - \xi)\Big) = \sum_{\lambda \in \Gamma^\perp} \one{\dalpha B(\id,r) - \xi}(\lambda) = \sum_{\lambda \in \Gamma^\perp} \one{\dalpha B(\id,r) + \lambda}(\xi)
\]
and observe that supp$\, S_{\dalpha}^r \subset (\Gamma^\perp)_{\dalpha}^r$.
By $i.$, Lemma \ref{lem:Weil} we then have
\begin{align}\label{eq:basiccounting}
\nu_{\wh{G}}(\dalpha(& B(\id,r))) = \int_{\wh{G}} \one{\dalpha(B(\id,r))}(\xi) d\nu_{\wh{G}}(\xi) = \sum_{\lambda \in \Gamma^\perp} \int_{\Omega} \one{\dalpha(B(\id,r))}(\xi + \lambda) d\nu_{\wh{G}}(\xi)\nonumber\\
& = \int_{\Omega} S_{\dalpha}^r(\xi) d\nu_{\wh{G}}(\xi) = \int_{\Omega_{\dalpha}^r} S_{\dalpha}^r(\xi) d\nu_{\wh{G}}(\xi) \geq \int_{\Omega_{\dalpha}^{\frac{r}{2}}} S_{\dalpha}^r(\xi) d\nu_{\wh{G}}(\xi).
\end{align}
Now, by the definition of $(\Gamma^\perp)_{\dalpha}^{\frac{r}{2}}$, for $\xi \in \Omega_{\dalpha}^{\frac{r}{2}}$ there exists at least one $\lambda_\xi \in \Gamma^\perp$ such that
\begin{equation}\label{eq:whereisxi}
\xi \, \in \, \dalpha B(\id,{\frac{r}{2}}) + \lambda_\xi = \dalpha B(\dalpha^{\, -1}(\lambda_\xi),{\frac{r}{2}}) .
\end{equation}
We claim that, when $\xi \in \Omega_{\dalpha}^{\frac{r}{2}}$, and for $\lambda_\xi$ as above, it holds
\begin{equation}\label{eq:trickclaim}
\dalpha B(\id,{\frac{r}{2}}) - \lambda_\xi \subset \dalpha B(\id,r) - \xi .
\end{equation}
Claim (\ref{eq:trickclaim}) implies that, when $\xi \in \Omega_{\dalpha}^{\frac{r}{2}}$, we can estimate from below the number of points in the $\dalpha$ image of a ball translated by $\xi$ with
\begin{align*}
S_{\dalpha}^r(\xi) & = \sharp \Big(\Gamma^\perp \cap (\dalpha B(\id,r) - \xi)\Big) \geq \sharp \Big(\Gamma^\perp \cap (\dalpha B(\id,{\frac{r}{2}}) - \lambda_\xi)\Big)\\
& = \sharp \Big(\Gamma^\perp \cap \dalpha B(\id,{\frac{r}{2}})\Big) = S_{\dalpha}^{\frac{r}{2}}(\id).
\end{align*}
By inserting this into (\ref{eq:basiccounting}) we then get the desired estimate
\[
\nu_{\wh{G}}(\dalpha B(\id,r)) \geq \nu_{\wh{G}}(\Omega_{\dalpha}^{\frac{r}{2}}) S_{\dalpha}^{\frac{r}{2}}(\id).
\]
In order to prove (\ref{eq:trickclaim}), let $\zeta = \dalpha(\eta) \in \dalpha B(\id,{\frac{r}{2}}) - \lambda_\xi$. Then
\begin{align*}
\dg(\eta,-\dalpha^{\, -1}(\xi)) & \leq \dg(\eta,-\dalpha^{\, -1}(\lambda_\xi)) + \dg(-\dalpha^{\, -1}(\lambda_\xi),-\dalpha^{\, -1}(\xi)) \\
& < \frac{r}{2} + \dg(\dalpha^{\, -1}(\lambda_\xi),\dalpha^{\, -1}(\xi)) < \frac{r}{2} + \frac{r}{2}
\end{align*}
where the last inequality is due to (\ref{eq:whereisxi}).
This implies that $\eta \in B(-\dalpha^{\, -1}(\xi),r)$, so $\zeta \in \dalpha B(\id,r) - \xi$.
\end{proof}

\subsection{Measurable families of bi-Lipschitz automorphisms}\label{subsec:automorph}
Let $G$ be a locally compact and second countable abelian group.
Let $(H,\varsigma)$ be a measure space, and let $\alpha : H \to \textnormal{Aut}(G)$ be a measurable family of $G$-automorphisms, i.e. assume that
\[
(x,h) \mapsto \alpha_h(x) \ \textnormal{is measurable in} \ (G,\mu_G) \times (H,\varsigma) .
\]
Let $\delta : H \to \R^+$ be the $\varsigma$-measurable map satisfying (see \cite[(15.26)]{HewittRossI})
\[
\int_G f(\alpha_h(x)) d\mu_G(x) = \delta(h) \int_G f(x) d\mu_G(x) \qquad \forall \ f \in L^1(G).
\]
Let us define the dual action $\dalpha : H \to \textnormal{Aut}(\wh{G})$ by\footnote{When $H$ is a group and $\alpha$ is a homomorphism, this too is a homomorphism, since \[\langle \dalpha_{h_1}(\dalpha_{h_2}(\xi)),x\rangle = \langle \dalpha_{h_2}(\xi),\alpha_{h_1}^{-1}(x)\rangle = \langle \xi,\alpha_{h_1 h_2}^{-1}(x)\rangle = \langle \dalpha_{h_1h_2}(\xi),x\rangle.\]}
\[
\dalpha_h(\xi) \, : \, \langle \dalpha_h(\xi),x\rangle = \langle \xi,\alpha_h^{-1}(x)\rangle .
\]
It is well-known that $\dalpha$-changes of variables on $\wh{G}$ read as follows.
\begin{lemma}\label{lem:changeofvar}
For all $\phi \in L^1(\wh{G})$ it holds
\[
\int_{\wh{G}} \phi(\dalpha_h(\xi)) d\nu_{\wh{G}}(\xi) = \delta(h)^{-1} \int_{\wh{G}} \phi(\xi) d\nu_{\wh{G}}(\xi) .
\]
\end{lemma}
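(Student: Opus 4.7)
The plan is to use uniqueness of Haar measure together with Plancherel's theorem to pin down the scaling constant. Since the duality pairing turns the group automorphism $\alpha_h$ into its transpose $\dalpha_h$, the latter is a continuous group automorphism of $\wh{G}$. Consequently, the positive functional $\phi \mapsto \int_{\wh{G}} \phi(\dalpha_h(\xi)) d\nu_{\wh{G}}(\xi)$ defines a non-trivial translation-invariant Radon measure on $\wh{G}$ (invariance is a direct substitution using that $\dalpha_h$ respects the group law), so by the uniqueness of Haar measure there exists $c(h)>0$ such that
\[
\int_{\wh{G}} \phi(\dalpha_h(\xi)) d\nu_{\wh{G}}(\xi) = c(h) \int_{\wh{G}} \phi(\xi) d\nu_{\wh{G}}(\xi) \qquad \forall \ \phi \in L^1(\wh{G}).
\]
The rest of the proof is devoted to identifying $c(h) = \delta(h)^{-1}$.

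To pin down $c(h)$, I would pick any $f \in L^1(G) \cap L^2(G)$ with $\|f\|_2 \neq 0$ and set $g(x) = f(\alpha_h(x))$. The defining property of $\delta$ applied to $|f|^2$ gives $\|g\|_2^2 = \delta(h)\|f\|_2^2$. On the Fourier side, I substitute $y = \alpha_h(x)$ in the integral for $\wh{g}$: the defining identity of $\delta$ itself identifies the Jacobian as $d\mu_G(x) = \delta(h) d\mu_G(y)$, and the duality relation $\langle \xi, \alpha_h^{-1}(y)\rangle = \langle \dalpha_h(\xi), y\rangle$ turns the resulting exponential into a Fourier kernel with frequency $\dalpha_h(\xi)$, yielding $\wh{g}(\xi) = \delta(h) \wh{f}(\dalpha_h(\xi))$.

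Applying Plancherel to both $f$ and $g$, together with the provisional change-of-variable formula applied to $|\wh{f}|^2 \in L^1(\wh{G})$, one obtains
\[
\delta(h)\|f\|_2^2 = \|g\|_2^2 = \|\wh{g}\|_2^2 = \delta(h)^2 \int_{\wh{G}} |\wh{f}(\dalpha_h(\xi))|^2 d\nu_{\wh{G}}(\xi) = \delta(h)^2 c(h) \|\wh{f}\|_2^2 = \delta(h)^2 c(h) \|f\|_2^2,
\]
so that dividing through by $\delta(h)\|f\|_2^2$ forces $c(h) = \delta(h)^{-1}$, as required. No step is genuinely difficult; the only mild obstacle is keeping the direction of the Jacobian factor $\delta(h)$ straight when substituting $y = \alpha_h(x)$, which must be read off from the defining identity $\int_G F(\alpha_h(x))\,d\mu_G(x) = \delta(h)\int_G F\,d\mu_G$ rather than guessed from Euclidean analogy.
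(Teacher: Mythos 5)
Your proof is correct, and its computational heart is the same as the paper's: both hinge on Plancherel together with the identity $\wh{f\circ\alpha_h}(\xi)=\delta(h)\,\wh{f}(\dalpha_h(\xi))$, which converts the spatial Jacobian $\delta(h)$ into a statement about $\dalpha_h$ on the frequency side. The only structural difference is how each argument passes from a single computation to all of $L^1(\wh{G})$: you first invoke uniqueness of Haar measure to know a priori that $\phi\mapsto\int_{\wh{G}}\phi(\dalpha_h(\xi))\,d\nu_{\wh{G}}(\xi)$ is $c(h)$ times Haar measure, and then determine $c(h)$ by testing on one $f\in L^1(G)\cap L^2(G)$; the paper instead runs the Plancherel computation for arbitrary $f\in L^2(G)$ and concludes because every nonnegative $\phi\in L^1(\wh{G})$ is of the form $|\wh{f}|^2$ (taking $\wh{f}=\sqrt{\phi}$ and using surjectivity of the Fourier transform onto $L^2(\wh{G})$). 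Your Haar-uniqueness framing is slightly more self-contained in that it makes the reduction to a single constant explicit rather than leaving the globalization implicit; the paper's version avoids having to verify that the pushforward of $\nu_{\wh{G}}$ under $\dalpha_h^{-1}$ is a translation-invariant Radon measure. Either way the bookkeeping of the factor $\delta(h)$ (one power from $\|f\circ\alpha_h\|_2^2$, two powers from $|\wh{f\circ\alpha_h}|^2$) comes out the same, and your final identification $c(h)=\delta(h)^{-1}$ is right.
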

\begin{proof}[Sketch of the proof]
Since $\mu_G$ and $\nu_{\wh{G}}$ are such that the $G$-Fourier transform is unitary, for all $f \in L^2(G)$ we have
\begin{align*}
& \int_{\wh{G}} |\wh{f}(\xi)|^2 d\nu_{\wh{G}}(\xi) = \int_G |f(x)|^2 d\mu_G(x) = \delta(h)^{-1}\int_G |f(\alpha_h(x))|^2 d\mu_G(x)\\
& = \delta(h)^{-1}\int_{\wh{G}} |\int_G f(\alpha_h(x)) \ol{\langle \xi, x \rangle} d\mu_G(x)|^2 d\nu_{\wh{G}}(\xi)\\
& = \delta(h) \int_{\wh{G}} |\int_G f(x) \ol{\langle \xi, \alpha_h^{-1}(x) \rangle} d\mu_G(x)|^2 d\nu_{\wh{G}}(\xi) = \delta(h) \int_{\wh{G}} |\wh{f}(\dalpha_h(\xi))|^2 d\nu_{\wh{G}}(\xi)
\end{align*}
which proves the desired identity for all $\phi \in L^1(\wh{G})$ with values in $\R^+$.
\end{proof}
\begin{remark}\label{rem:measureballdilations}
Note that, by Lemma \ref{lem:changeofvar}, we have
\[
\nu_{\wh{G}} (\dalpha_hB(\xi_0,r)) = \delta(h) \nu_{\wh{G}} (B(\xi_0,r)) \quad \forall \, \xi_0 \in \wh{G}, \, \forall \, r > 0, \, \forall \, h \in H.
\]
\end{remark}

In order to control the deformation on $\wh{G}$ due to the action of $\dalpha$ we will use the notion of bi-Lipschitz maps. If $(E,d)$ is a metric space, a bijection $f : E \to E$ is called bi-Lipschitz if there exist $0 < \ell \leq L < \infty$ such that
\begin{equation}\label{eq:bilip}
\ell d(x,y) \leq d(f(x),f(y)) \leq L d(x,y) \quad \forall \ x,y \in E.
\end{equation}
% Note that a definition of bi-Lipschitz that appears frequently on the literature considers the upper and lower constants to be simply related by $\ell = \displaystyle\frac1L$. This is a choice that can always be made for bi-Lipschitz maps, but it is not optimal in general, and in particular it is not suitable to deal with situations of interest in the present paper, such as so-called expanding automorphisms.
For bi-Lipschitz maps, one has the following known result.
\begin{lemma}\label{lem:balls}
Let $f : E \to E$ be a bi-Lipschitz map on the metric space $(E,d)$ and for $x \in E$, $r > 0$ let us denote by $B(x,r) = \{y \in E \, : \, d(y,x) < r\}$. Then
\[
B(f(x),\ell r) \subset f(B(x,r)) \subset B(f(x),L r) \quad \forall \ x \in E \, , \ \forall \ r > 0.
\]
\end{lemma}
\begin{proof}
To prove the inclusion from above, let $y \in f(B(x,r))$, i.e. $y = f(z)$ for some $z \in E$ such that $d(z,x) < r$. Then, by the right hand side inequality in (\ref{eq:bilip}), we have
\[
d(y,f(x)) = d(f(z),f(x)) \leq L d(z,r) < L r
\]
so $y \in B(f(x),L r)$. To prove the inclusion from below, let $y \in B(f(x),\ell r)$, and let $z = f^{-1}(y)$. Then, by the left hand side inequality in (\ref{eq:bilip}), we have
\[
\ell d(z,x) \leq d(f(z),f(x)) = d(y,f(x)) < \ell r
\]
so $z \in B(x,r)$, i.e. $y = f(z) \in f(B(x,r))$.
\end{proof}

The class of families of automorphisms that we will consider is that satisfying the following definition.
\begin{definition}
We say that the map $\dalpha$ is \emph{measurably bi-Lipschitz} if there exist two $\varsigma$-measurable functions $\ell : H \to \R^+$ and $L : H \to \R^+$ satisfying
\begin{equation}\label{eq:metric}
\ell(h) \dg(\xi,\id) \leq \dg(\dalpha_h(\xi),\id) \leq L(h) \dg(\xi,\id) \quad \forall \ \xi \in \wh{G} .
\end{equation}
\end{definition}
For such families, by Lemma \ref{lem:balls}, we have that
\begin{equation}\label{eq:balls}
B(\dalpha_h (\xi_0), \ell(h) r) \subset \dalpha_h B(\xi_0,r) \subset B(\dalpha_h (\xi_0), L(h) r) \quad \forall \ \xi_0 \in \wh{G} , \ \forall \ h \in H.
\end{equation}

% In terms of the previous definition, we can get the following control on $\dalpha$ images of metric balls.
% \begin{lemma}\label{lem:balls}
% Let $\dalpha$ be measurably bi-Lipschitz. Then
% \[
% B(\dalpha_h (\xi_0), \ell(h) r) \subset \dalpha_h B(\xi_0,r) \subset B(\dalpha_h (\xi_0), L(h) r) \quad \forall \ \xi_0 \in \wh{G} , \ \forall \ h \in H.
% \]
% \end{lemma}
% \begin{proof}
% Observe first that, as a consequence of (\ref{eq:metric}), we also have
% \begin{equation}\label{eq:metricinverse}
% \frac{1}{L(h)} \dg(\xi,\id) \leq \dg(\dalpha_h^{\,-1}(\xi),\id) \leq \frac{1}{\ell(h)} \dg(\xi,\id)  \quad \forall \ \xi \in \wh{G} .
% \end{equation}
% In order to prove the inclusion from above, we write
% \[
% \dalpha_h B(\xi_0,r) = \{\xi \in \wh{G} \, : \, \dg(\dalpha_h^{\,-1}(\xi),\xi_0) < r\} .
% \]
% Now, by the invariance of the metric and the left hand side in (\ref{eq:metricinverse}), we have
% \[
% \dg(\dalpha_h^{\,-1}(\xi),\xi_0) = \dg(\dalpha_h^{\,-1}(\xi - \dalpha_h(\xi_0)),\id) \geq \frac{1}{L(h)}\dg(\xi,\dalpha_h(\xi_0))
% \]
% so that $\xi \in \dalpha_h B(\xi_0,r) \Rightarrow \dg(\xi,\dalpha_h(\xi_0)) < L(h) r$.
% In order to obtain the inclusion from below we proceed analogously and, for $\xi \in B(\dalpha_h (\xi_0), \ell(h) r)$, by the right hand side in (\ref{eq:metricinverse}) we get
% \begin{displaymath}
% \dg(\dalpha_h^{-1}(\xi),\xi_0) = \dg(\dalpha_h^{\,-1}(\xi - \dalpha_h(\xi_0)),\id) \leq \frac{1}{\ell(h)} \dg(\xi , \dalpha_h(\xi_0)) < r .
% \qedhere
% \end{displaymath}
% \end{proof}

\subsection{The frame condition}
Let $G$ be a locally compact and second countable abelian group, and let us denote with $+$ its composition law. Let $\Gamma$ be a cocompact subgroup of $G$.
Let $T : \Gamma \to \mathcal{U}(L^2(G))$ be the unitary representation of $\Gamma$-translations on $L^2(G)$, i.e. $T(\gamma) f(x) = f(x - \gamma)$, and let $D : H \to \mathcal{U}(L^2(G))$ be the unitary operator-valued map given by
\[
D(h)f(x) = \delta(h)^{-\frac12}f(\alpha_h(x))
\]
where $H$ and $\alpha$ are as in \S \ref{subsec:automorph}.
Let us also denote with $\wh{D} : H \to \mathcal{U}(L^2(\wh{G}))$ the unitary operator-valued map obtained by conjugation with the Fourier transform, i.e. such that $\wh{D}(h)\wh{f} = \wh{D(h)f}$. It reads explicitly
\begin{equation}\label{eq:dualdilations}
\wh{D}(h) \wh{f}(\xi) = \delta(h)^{\frac12} \wh{f}(\dalpha_h(\xi))
\end{equation}
since, by definition of $\delta$ and $\dalpha$, we have
\begin{align*}
\wh{D(h) f}(\xi) & = \int_G D(h)f(x) \ol{\langle \xi, x \rangle} d\mu_G(x) = \delta(h)^{-\frac12} \int_G f(\alpha_{h}(x)) \ol{\langle \xi, x \rangle} d\mu_G(x)\\
& = \delta(h)^{\frac12} \int_G f(x) \ol{\langle \xi, \alpha_{h}^{-1}(x) \rangle} d\mu_G(x) = \delta(h)^{\frac12} \wh{f}(\dalpha_h(\xi)).
\end{align*}

The two main notions that we will relate in the next sections are the following generalized definitions of affine frame and of Calder\'on's sum.

\begin{definition}\label{def:frames}
With the above notations, let $\Hil$ be a closed subspace of $L^2(G)$ with the induced norm. For $\psi \in L^2(G)$, we say that the affine system
\[
\mathcal{A}_{\Gamma,H}(\psi) = \Big\{D(h)T(\gamma)\psi \, : \, \gamma \in \Gamma \, , \ h \in H\Big\}
\]
is a $\varsigma$-frame of $\Hil$ with constants $0 < A \leq B < \infty$ if $\mathcal{A}_{\Gamma,H}(\psi) \subset \Hil$ and
\begin{equation}\label{eq:frame}
A \|f\|_{L^2(G)}^2 \leq \int_H \Big(\int_\Gamma |\langle f, D(h)T(\gamma)\psi\rangle_{L^2(G)}|^2  d\mu_\Gamma(\gamma) \Big) d\varsigma(h) \leq B \|f\|_{L^2(G)}^2
\end{equation}
holds for all $f \in \Hil$. If only the right inequality holds for all $f \in \Hil$, we say that $\mathcal{A}_{\Gamma,H}(\psi)$ is a $\varsigma$-Bessel system for $\Hil$.
\end{definition}

\begin{definition}
For $\psi \in L^2(G)$, we call $\varsigma$-Calder\'on sum for $\mathcal{A}_{\Gamma,H}(\psi)$ the expression
\[
\mathcal{C}_\psi(\xi) = \int_H |\wh{\psi}(\dalpha_h(\xi))|^2 d\varsigma(h) .
\]
\end{definition}

As for standard affine systems, with the next lemma we provide a trivial but key equation which has a fundamental role in the following computations.
% As customary when dealing with affine systems, with the next lemma we provide the basic Fourier formulation which will serve as a starting point for all following computations.

\begin{lemma}\label{lem:fourier}
If $\mathcal{A}_{\Gamma,H}(\psi)$ is a $\varsigma$-Bessel system for $\Hil \subset L^2(G)$, then for all $f \in \Hil$ the central term in (\ref{eq:frame}) reads
\begin{align*}
\mathcal{I}_{\Gamma,H}(f,\psi) & = \int_H \Big(\int_\Gamma |\langle f, D(h)T(\gamma)\psi\rangle_{L^2(G)}|^2  d\mu_\Gamma(\gamma) \Big) d\varsigma(h)\\
& = \int_H \delta(h)^{-1} \Big(\int_\Omega |\sum_{\lambda \in \Gamma^\perp} \wh{f}(\dalpha_h^{\,-1}(\xi + \lambda)) \ol{\wh{\psi}(\xi + \lambda)}|^2  d\nu_{\wh{G}}(\xi) \Big) d\varsigma(h)
\end{align*}
where $\Omega \subset \wh{G}$ stands for a fundamental domain for $\wh{G}/\Gamma^\perp$.
\end{lemma}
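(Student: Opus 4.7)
The plan is to transport everything to the Fourier side on $\wh{G}$ and use the two parts of Lemma \ref{lem:Weil} in sequence: part $i.$ to periodize the resulting integrand over the fundamental domain $\Omega$, and part $ii.$ to evaluate the $L^2(\Gamma)$-norm of the Fourier coefficients by Plancherel on the compact quotient $\wh{G}/\Gamma^\perp$.

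More precisely, I would first compute $\wh{D(h)T(\gamma)\psi}$. Using the standard identity $\wh{T(\gamma)\psi}(\xi)=\overline{\langle\xi,\gamma\rangle}\wh{\psi}(\xi)$ together with formula (\ref{eq:dualdilations}) for $\wh{D}(h)$, one obtains
\[
\wh{D(h)T(\gamma)\psi}(\xi)=\delta(h)^{\frac12}\overline{\langle \dalpha_h(\xi),\gamma\rangle}\,\wh{\psi}(\dalpha_h(\xi)).
\]
Applying Plancherel on $G$ and then the change of variable $\eta=\dalpha_h(\xi)$ via Lemma \ref{lem:changeofvar} yields
\[
\langle f,D(h)T(\gamma)\psi\rangle_{L^2(G)}=\delta(h)^{-\frac12}\int_{\wh{G}}\wh{f}(\dalpha_h^{\,-1}(\eta))\,\overline{\wh{\psi}(\eta)}\,\langle\eta,\gamma\rangle\,d\nu_{\wh{G}}(\eta).
\]

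Next I would periodize. Writing $\wh{G}$ as a disjoint union of $\Gamma^\perp$-translates of $\Omega$, part $i.$ of Lemma \ref{lem:Weil} applied to the $L^1$ integrand (after factoring $\langle\eta,\gamma\rangle$ and using $\langle\lambda,\gamma\rangle=1$ for $\lambda\in\Gamma^\perp$) gives
\[
\langle f,D(h)T(\gamma)\psi\rangle_{L^2(G)}=\delta(h)^{-\frac12}\int_\Omega \Phi_h(\eta)\,\langle\eta,\gamma\rangle\,d\nu_{\wh{G}}(\eta),
\]
where
\[
\Phi_h(\eta):=\sum_{\lambda\in\Gamma^\perp}\wh{f}(\dalpha_h^{\,-1}(\eta+\lambda))\,\overline{\wh{\psi}(\eta+\lambda)}.
\]
Thus the inner products of $f$ against $\{D(h)T(\gamma)\psi\}_{\gamma\in\Gamma}$ are, up to the factor $\delta(h)^{-1/2}$, precisely the Fourier coefficients on $\Omega\approx\wh{G/\Gamma}$ of the periodized function $\Phi_h$. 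Applying Lemma \ref{lem:Weil} part $ii.$ gives
\[
\int_\Gamma|\langle f,D(h)T(\gamma)\psi\rangle|^2\,d\mu_\Gamma(\gamma)=\delta(h)^{-1}\int_\Omega|\Phi_h(\eta)|^2\,d\nu_{\wh{G}}(\eta),
\]
and integrating in $h$ against $\varsigma$ delivers the claimed identity.

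The main technical point to watch is the justification of periodization and of the application of Lemma \ref{lem:Weil} $ii.$: one needs $\Phi_h\in L^2(\Omega)$ for a.e.\ $h$ and convergence of the defining series in a suitable sense. This is exactly where the Bessel hypothesis enters. A standard way to handle it is to run the computation first for $f$ and $\psi$ in a convenient dense class (for instance with $\wh{f}$ and $\wh{\psi}$ compactly supported, so that the series for $\Phi_h$ reduces to a finite sum and Fubini applies trivially), and then extend by continuity using the Bessel bound in (\ref{eq:frame}), which controls the left-hand side and forces both sides to define continuous quadratic forms in $f$ on $\mathcal{H}$.
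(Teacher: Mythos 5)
Your main computation is exactly the paper's: Plancherel on $G$, the identity $\wh{T(\gamma)\psi}(\xi)=\ol{\langle\xi,\gamma\rangle}\,\wh{\psi}(\xi)$, periodization over $\Omega$ via part $i.$ of Lemma \ref{lem:Weil} (using $\langle\lambda,\gamma\rangle=1$), and then part $ii.$ to convert the $L^2(\Gamma)$-norm of the coefficients of $\Phi_h$ into $\delta(h)^{-1}\int_\Omega|\Phi_h|^2\,d\nu_{\wh{G}}$. (The paper moves $\wh{D}(h)^{-1}$ onto $\wh{f}$ by unitarity rather than performing the change of variables $\eta=\dalpha_h(\xi)$, but that is the same step.)

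The one place you diverge is the justification that $\Phi_h\in L^2(\Omega)$, and your proposed density argument does not work as stated. The lemma is asserted for an \emph{arbitrary} closed subspace $\Hil\subset L^2(G)$, and the class of $f\in\Hil$ with $\wh{f}$ compactly supported (and bounded, which you would also need, since a product of two $L^2$ functions is a priori only $L^1$) need not be dense in $\Hil$ --- it can even reduce to $\{0\}$, e.g.\ when $\Hil$ is the closed span of a single function whose Fourier transform is supported everywhere. Moreover, ``extend by continuity'' is not free here: the right-hand side is a quadratic form in $f$ whose boundedness is essentially what is at stake, so passing from a dense class to all of $\Hil$ would require an extra semicontinuity or Fatou-type argument. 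The paper closes this gap directly, for each individual $f\in\Hil$, with no approximation: by Cauchy--Schwarz the function $\xi\mapsto\wh{f}(\dalpha_h^{\,-1}(\xi))\ol{\wh{\psi}(\xi)}$ lies in $L^1(\wh{G})$, so its periodization $\Phi_h$ is a well-defined element of $L^1(\Omega)$ whose Fourier coefficients are precisely $\delta(h)^{1/2}\langle f,D(h)T(\gamma)\psi\rangle$; the Bessel hypothesis places these coefficients in $L^2(\Gamma)$ for $\varsigma$-a.e.\ $h$, and Plancherel for the compact quotient $\wh{G}/\Gamma^\perp$ then forces $\Phi_h\in L^2(\Omega)$, so part $ii.$ of Lemma \ref{lem:Weil} applies. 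You should replace your density step with this direct argument.
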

\begin{proof}
By Plancherel theorem on $L^2(G)$ and the unitarity of $\wh{D}$ we have
\begin{align*}
\mathcal{I}_{\Gamma,H}(f,\psi) & = \int_H \Big(\int_\Gamma |\langle \wh{f}, \wh{D}(h)\wh{T(\gamma)\psi}\rangle_{L^2(\wh{G})}|^2  d\mu_\Gamma(\gamma) \Big) d\varsigma(h)\\
& = \int_H \Big(\int_\Gamma |\langle \wh{D}(h)^{-1}\wh{f}, \wh{T(\gamma)\psi}\rangle_{L^2(\wh{G})}|^2  d\mu_\Gamma(\gamma) \Big) d\varsigma(h)
\end{align*}
where, by (\ref{eq:dualdilations}), $\wh{D}(h)^{-1} \wh{f}(\xi) = \delta(h)^{-\frac12} \wh{f}(\dalpha_h^{\,-1}(\xi))$, and
\[
\wh{T(\gamma)\psi}(\xi) = \int_G \psi(x - \gamma) \ol{\langle \xi, x \rangle} d\mu_G(x) = \ol{\langle \xi, \gamma \rangle} \wh{\psi}(\xi).
\]
Thus, by $i.$, Lemma \ref{lem:Weil}, we have
\begin{align*}
\langle \wh{D}(h)^{-1}\wh{f}, \wh{T(\gamma)\psi}&\rangle_{L^2(\wh{G})} = \delta(h)^{-\frac12} \int_{\wh{G}} \wh{f}(\dalpha_h^{\, -1}(\xi)) \ol{\wh{\psi}(\xi)} \langle \xi, \gamma \rangle d\nu_{\wh{G}}(\xi)\\
& = \delta(h)^{-\frac12} \int_{\Omega} \left(\sum_{\lambda \in \Gamma^\perp}\wh{f}(\dalpha_h^{\, -1}(\xi+\lambda)) \ol{\wh{\psi}(\xi+\lambda)}\right) \langle \xi, \gamma \rangle d\nu_{\wh{G}}(\xi) .
\end{align*}
Since $\mathcal{A}_{\Gamma,H}(\psi)$ is a Bessel system, then the left hand side in the above chain of identities, as a function of $\gamma$, belongs to $L^2(\Gamma)$ for $\varsigma$-a.e. $h \in H$, so that
\[
\xi \mapsto \sum_{\lambda \in \Gamma^\perp}\wh{f}(\dalpha_h^{\, -1}(\xi+\lambda)) \ol{\wh{\psi}(\xi+\lambda)}
\]
belongs to $L^2(\Omega)$, and the conclusion follows by $ii.$, Lemma \ref{lem:Weil}.
\end{proof}

\section{Boundedness of the Calder\'on's sum}\label{sec:boundedness}

In this section we will make use of the following standing assumptions:\vspace{1ex}\\
{\bf I)} The groups $G,\wh{G},\Gamma,\Gamma^\perp$ and their Haar measures are as in \S\ref{sec:preliminaries}.\\
{\bf II)} The group $\wh{G}$ has an invariant metric $\dg$ such that $(\wh{G},\dg,\nu_{\wh{G}})$ is weakly doubling\footnote{Note that this is sufficient to have Lebesgue's differentiation theorem, see \cite[Theorem 3.4.3]{Heinonen}. A different condition that is weaker than doubling and allows to obtain a Lebesgue's differentiation theorem is used in \cite[Theorem (44.18)]{HewittRossII} and \cite{BownikRoss}.}, i.e. that there exist $r_0 > 0$ and $C > 0$ such that
\begin{equation}\label{eq:Vitali}
\nu_{\wh{G}}(B(\id,2r)) \leq C \nu_{\wh{G}}(B(\id,r)) \quad \forall \ r < r_0.
\end{equation}
% However, a weaker sufficient condition\footnote{A different condition that is weaker than doubling and allows to obtain a Lebesgue's differentiation theorem is used in \cite[Theorem (44.18)]{HewittRossII} and \cite{BownikRoss}.}, which will be also sufficient for our purposes and hence constitutes our assumption, is (see \cite[Theorem 3.4.3]{Heinonen})
% \begin{equation}\label{eq:Vitali}
% \limsup_{\epsilon \to 0} \frac{\nu_{\wh{G}}(B(\id,2\epsilon))}{\nu_{\wh{G}}(B(\id,\epsilon))} < \infty .
% \end{equation}
{\bf III)} The automorphisms map $\dalpha : H \to \textnormal{Aut}(\wh{G})$ is measurably bi-Lipschitz with respect to $\dg$ and, in order to make use of Fubini's theorem, the Borel measure $\varsigma$ on $H$ is $\sigma$-finite.

\subsection{The test space}
Following the original idea of \cite{ChuiShi}, for $\xi_0 \in \wh{G}$ and $\epsilon > 0$ let us consider the unit norm $f^{\xi_0}_\epsilon \in L^2(G)$ given by
\[
\wh{f^{\xi_0}_\epsilon} = \frac{1}{\sqrt{\nu_{\wh{G}} (B(\xi_0,\epsilon))}} \one{B(\xi_0,\epsilon)}.
\]
By Lemma \ref{lem:fourier} and Remark \ref{rem:measureballdilations}, whenever $\mathcal{A}_{\Gamma,H}(\psi)$ is a $\varsigma$-Bessel system for $\Hil \subset L^2(G)$, and if $f^{\xi_0}_\epsilon$ belongs to $\Hil$, we get
\begin{equation}\label{eq:ChuiShi}
\mathcal{I}_{\Gamma,H}(f^{\xi_0}_\epsilon,\psi) = \int_H \int_\Omega |\sum_{\lambda \in \Gamma^\perp} \one{\dalpha_hB(\xi_0,\epsilon)}(\xi + \lambda) \ol{\wh{\psi}(\xi + \lambda)}|^2  \frac{d\nu_{\wh{G}}(\xi) d\varsigma(h)}{\nu_{\wh{G}} (\dalpha_hB(\xi_0,\epsilon))} .
\end{equation}

If $\mathcal{A}_{\Gamma,H}(\psi)$ is a frame with constants $A$ and $B$, then (\ref{eq:ChuiShi}) is bounded from above and below by these same constants. We will show that it is possible to obtain from this representation formula the same bounds for the Calder\'on's sum in the limit for $\epsilon \to 0$.

Since we will be dealing with such a special class of test functions, whose $G$-Fourier transform is the characteristic function of a metric ball in $\wh{G}$, we can make precise the requirement $f^{\xi_0}_\epsilon \in \Hil$ with the following definition.
\begin{definition}\label{def:Gset}
Let $\Hil$ be a Hilbert subspace of $L^2(G)$ endowed with the induced norm.
Denoting by $\wh{\Hil} \subset L^2(\wh{G})$ the image of $\Hil$ under the $G$-Fourier transform, for $\epsilon_0 > 0$, define %the set of Lebesgue points of $\Hil$ as
\[
\wh{\mathcal{G}}^{\,\Hil}_{\epsilon_0} = \{\xi \in \wh{G} \, : \, \one{B(\xi,\epsilon)} \in \wh{\Hil} \quad \forall \ \epsilon < \epsilon_0\}
\]
that is the set characterized by $f^{\xi_0}_\epsilon \in \Hil \ \iff \ \xi_0 \in \wh{\mathcal{G}}^{\,\Hil}_{\epsilon_0} \ \textnormal{and} \ \epsilon < \epsilon_0$.
\end{definition}

In order to avoid trivial pitfalls or too technical assumptions, we will always implicitly assume that such $\wh{\mathcal{G}}^{\,\Hil}_{\epsilon_0}$ has a positive $\nu_{\wh{G}}$ measure. In the next sections we will see that this is actually the case in relevant situations. As a preliminary toy example, let $B_r \subset \wh{G}$ be a ball of radius $r > \epsilon_0$. The Paley-Wiener space $\Hil_r = \{f \in L^2(G) \, : \, \textnormal{supp}\wh{f} \subset \ol{B_r}\}$ is such that $\wh{\mathcal{G}}^{\,\Hil_r}_{\epsilon_0} = B_{r - \epsilon_0}$.

A crucial observation, already present in \cite{ChuiShi}, is that, by restricting to a subset of $H$ which makes the automorphisms a uniformly bounded Lipschitz family, $\mathcal{I}_{\Gamma,H_M}(f^{\xi_0}_\epsilon,\psi)$ actually approximates the Calder\'on's sum for small $\epsilon$.

\begin{lemma}\label{lem:Lipschitz}
Let $\mathcal{A}_{\Gamma,H}(\psi)$ be an affine system as in Definition \ref{def:frames}. If $\mathcal{A}_{\Gamma,H}(\psi)$ is a $\varsigma$-Bessel system for $\Hil \subset L^2(G)$ and, for $M > 0$,  $H_M = \{h \in H \, : \, L(h) < M\}$, then there exists $\epsilon_0 > 0$ such that
\begin{equation}\label{eq:HM}
\mathcal{I}_{\Gamma,H_M}(f^{\xi_0}_\epsilon,\psi) = \frac{1}{\nu_{\wh{G}}(B(\xi_0,\epsilon))}\int_{B(\xi_0,\epsilon)} \int_{H_M} |\wh{\psi}(\dalpha_h(\xi))|^2 d\varsigma(h) d\nu_{\wh{G}}(\xi)
\end{equation}
for all $\xi_0 \in \wh{\mathcal{G}}^{\,\Hil}_{\epsilon_0}$ and all $\epsilon < \epsilon_0$.
\end{lemma}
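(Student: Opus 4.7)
The plan is to start from the representation formula (\ref{eq:ChuiShi}) and show that, once we restrict to $h \in H_M$ and choose $\epsilon$ small enough, the inner sum over $\Gamma^\perp$ collapses to a single term, so that the squared absolute value unfolds cleanly and a change of variables returns the Calder\'on integrand. The crucial geometric input is that $\Gamma^\perp$ is discrete: since $\wh{G}$ is metrizable and $\Gamma^\perp$ closed and discrete, there is some $\rho > 0$ such that $B(\id,\rho) \cap \Gamma^\perp = \{\id\}$, which by translation invariance of $\dg$ gives $\dg(\lambda_1,\lambda_2) \geq \rho$ for distinct $\lambda_1,\lambda_2 \in \Gamma^\perp$.

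Fix $M > 0$ and set $\epsilon_0 = \rho/(2M)$. For $h \in H_M$ we have $L(h) < M$, so Lemma \ref{lem:balls} yields $\dalpha_h B(\xi_0,\epsilon) \subset B(\dalpha_h(\xi_0), M\epsilon)$, a set of diameter at most $2M\epsilon < \rho$. This bound is uniform in $\xi_0$. Now if at some $\xi \in \Omega$ there were two distinct $\lambda_1,\lambda_2 \in \Gamma^\perp$ with $\xi + \lambda_i \in \dalpha_h B(\xi_0,\epsilon)$, then $\dg(\lambda_1,\lambda_2) = \dg(\lambda_1-\lambda_2,\id)$ would be bounded by the diameter of $\dalpha_h B(\xi_0,\epsilon)$, contradicting the separation of $\Gamma^\perp$. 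So the sum $\sum_\lambda \one{\dalpha_h B(\xi_0,\epsilon)}(\xi+\lambda)\ol{\wh\psi(\xi+\lambda)}$ has at most one nonzero term, whence
\[
\Bigl|\sum_\lambda \one{\dalpha_h B(\xi_0,\epsilon)}(\xi+\lambda)\ol{\wh\psi(\xi+\lambda)}\Bigr|^2 = \sum_\lambda \one{\dalpha_h B(\xi_0,\epsilon)}(\xi+\lambda)|\wh\psi(\xi+\lambda)|^2
\]
with no cross terms.

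Inserting this into (\ref{eq:ChuiShi}) restricted to $H_M$, the identity in $i.$, Lemma \ref{lem:Weil}, with $\phi(\eta) = \one{\dalpha_h B(\xi_0,\epsilon)}(\eta) |\wh\psi(\eta)|^2$ unfolds the sum-integral over $\Omega$ to an integral over $\wh{G}$, giving
\[
\int_\Omega \sum_\lambda \one{\dalpha_h B(\xi_0,\epsilon)}(\xi+\lambda)|\wh\psi(\xi+\lambda)|^2 d\nu_{\wh{G}}(\xi) = \int_{\dalpha_h B(\xi_0,\epsilon)}|\wh\psi(\eta)|^2 d\nu_{\wh{G}}(\eta).
\]
A change of variables $\eta = \dalpha_h(\xi)$ via Lemma \ref{lem:changeofvar} rewrites the right-hand side as $\delta(h)\int_{B(\xi_0,\epsilon)}|\wh\psi(\dalpha_h(\xi))|^2 d\nu_{\wh{G}}(\xi)$, and Remark \ref{rem:measureballdilations} gives $\nu_{\wh{G}}(\dalpha_h B(\xi_0,\epsilon)) = \delta(h)\nu_{\wh{G}}(B(\xi_0,\epsilon))$. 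The two factors of $\delta(h)$ cancel, so
\[
\mathcal{I}_{\Gamma,H_M}(f^{\xi_0}_\epsilon,\psi) = \frac{1}{\nu_{\wh{G}}(B(\xi_0,\epsilon))} \int_{H_M} \int_{B(\xi_0,\epsilon)} |\wh\psi(\dalpha_h(\xi))|^2 d\nu_{\wh{G}}(\xi)\, d\varsigma(h).
\]

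Finally, interchanging the order of integration gives the claim. This is legitimate by Fubini--Tonelli: the integrand is nonnegative and $\varsigma$ is $\sigma$-finite by standing assumption III, while $\nu_{\wh{G}}$ is $\sigma$-finite on $\wh{G}$. The main obstacle in this argument is really just the uniform geometric control in the first step, but the bi-Lipschitz bound confined to $H_M$ together with the discreteness of $\Gamma^\perp$ handles it cleanly; everything after that is a routine unfolding, change of variables, and Fubini.
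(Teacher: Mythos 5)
Your proof is correct, and it follows the same overall strategy as the paper's: restrict to $H_M$, show that for small $\epsilon$ the sum over $\Gamma^\perp$ in (\ref{eq:ChuiShi}) reduces to a single nonzero term, unfold by $i.$ of Lemma \ref{lem:Weil}, change variables, and exchange the order of integration. The one place where you genuinely diverge is the mechanism for collapsing the sum. The paper chooses a fundamental domain $\Omega$ containing a neighborhood of $\dalpha_h(\xi_0)$ (legitimate, since the periodized integrand over $\Omega$ is independent of the choice of $\Omega$, but the choice is implicitly $h$-dependent) and concludes that only the $\lambda=\id$ term survives because $\dalpha_h B(\xi_0,\epsilon)\subset\Omega$. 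You instead invoke the uniform separation of the discrete subgroup $\Gamma^\perp$ — a $\rho>0$ with $\dg(\lambda_1,\lambda_2)\geq\rho$ for distinct points — combined with the diameter bound $2M\epsilon$ coming from Lemma \ref{lem:balls}, to conclude that at most one translate $\xi+\lambda$ can lie in $\dalpha_h B(\xi_0,\epsilon)$. This buys an explicit threshold $\epsilon_0=\rho/(2M)$ that is uniform in $\xi_0$, in $h\in H_M$, and in the choice of $\Omega$, and it sidesteps any adaptation of the fundamental domain; the small price is that the surviving $\lambda$ may depend on $\xi$, which is harmless because the subsequent Weil periodization does not care which term survives. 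Everything downstream — the unfolding to $\int_{\dalpha_h B(\xi_0,\epsilon)}|\wh{\psi}|^2$, the cancellation of $\delta(h)$ against $\nu_{\wh{G}}(\dalpha_h B(\xi_0,\epsilon))=\delta(h)\,\nu_{\wh{G}}(B(\xi_0,\epsilon))$, and Tonelli for the nonnegative integrand — matches the paper's computation.
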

\begin{proof}
Without loss of generality, let us choose a fundamental set $\Omega$ that contains a neighborhood of $\dalpha_h(\xi_0)$. Since, by (\ref{eq:balls}), we have that
\[
\dalpha_hB(\xi_0,\epsilon) \subset B(\dalpha_h(\xi_0),M\epsilon) \qquad \forall \ h \in H_M \, ,
\]
then, for $\epsilon$ sufficiently small, we get $\dalpha_hB(\xi_0,\epsilon) \subset \Omega$.
For such an $\epsilon$, and for $h \in H_M$, the sum in (\ref{eq:ChuiShi}) contains only one term. Indeed, if $\lambda \neq \id$ then $(\dalpha_hB(\xi_0,\epsilon) + \lambda) \cap \Omega = \emptyset$, so $\one{\dalpha_hB(\xi_0,\epsilon)}(\xi + \lambda) = 0$ for all $\xi \in \Omega$. Thus,
\begin{align*}
\mathcal{I}_{\Gamma,H_M}(f^{\xi_0}_\epsilon,\psi) & = \int_{H_M} \int_\Omega \one{\dalpha_hB(\xi_0,\epsilon)}(\xi) |\wh{\psi}(\xi)|^2 d\nu_{\wh{G}}(\xi) \frac{d\varsigma(h)}{\nu_{\wh{G}} (\dalpha_hB(\xi_0,\epsilon))}\\
& = \int_{H_M} \int_{\dalpha_hB(\xi_0,\epsilon)} |\wh{\psi}(\xi)|^2 d\nu_{\wh{G}}(\xi) \frac{d\varsigma(h)}{\nu_{\wh{G}} (\dalpha_hB(\xi_0,\epsilon))}\\
& = \int_{H_M} \int_{B(\xi_0,\epsilon)} |\wh{\psi}(\dalpha_h(\xi))|^2 d\nu_{\wh{G}}(\xi) \frac{d\varsigma(h)}{\nu_{\wh{G}}(B(\xi_0,\epsilon))} .
\end{align*}
Since $\mathcal{I}_{\Gamma,H_M}(f^{\xi_0}_\epsilon,\psi) \leq \mathcal{I}_{\Gamma,H}(f^{\xi_0}_\epsilon,\psi)$ is bounded by the $\varsigma$-Bessel hypothesis, the claim follows by Fubini's theorem.
\end{proof}

\subsection{Bessel bound}\label{sec:above}

The upper bound for the Calder\'on's sum can be obtained directly from (\ref{eq:HM}) without any additional assumption, as shown by the following proposition.

\begin{proposition}\label{prop:Bessel}
Let $\mathcal{A}_{\Gamma,H}(\psi)$ be a $\varsigma$-Bessel system for $\Hil \subset L^2(G)$ with constant $B$. Then $\mathcal{C}_\psi \leq B$ almost everywhere in $\wh{\mathcal{G}}^{\,\Hil}_{\epsilon_0}$.
\end{proposition}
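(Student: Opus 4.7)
The plan is to evaluate the Bessel inequality on the test functions $f^{\xi_0}_\epsilon$, which have unit $L^2$-norm by Plancherel, apply the representation formula (\ref{eq:HM}) provided by Lemma \ref{lem:Lipschitz}, and then recover $\mathcal{C}_\psi(\xi_0)$ from the local averages by Lebesgue's differentiation theorem in the limit $\epsilon\to 0$, finally letting $M\to\infty$ by monotone convergence.

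First, fix $M>0$ and work with $H_M=\{h\in H:L(h)<M\}$. For any $\xi_0\in\wh{\mathcal{G}}^{\,\Hil}_{\epsilon_0}$ and $\epsilon<\epsilon_0$, since $\|f^{\xi_0}_\epsilon\|_{L^2(G)}=1$ and the system is $\varsigma$-Bessel with constant $B$, I get $\mathcal{I}_{\Gamma,H_M}(f^{\xi_0}_\epsilon,\psi)\leq\mathcal{I}_{\Gamma,H}(f^{\xi_0}_\epsilon,\psi)\leq B$. Lemma \ref{lem:Lipschitz} then rewrites the left-hand side as the $\nu_{\wh{G}}$-average on $B(\xi_0,\epsilon)$ of the nonnegative function
\[
F_M(\xi)=\int_{H_M}|\wh{\psi}(\dalpha_h(\xi))|^2\,d\varsigma(h),
\]
so that
\[
\frac{1}{\nu_{\wh{G}}(B(\xi_0,\epsilon))}\int_{B(\xi_0,\epsilon)} F_M(\xi)\,d\nu_{\wh{G}}(\xi)\leq B
\qquad\forall\ \epsilon<\epsilon_0.
\]

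The second step is to feed this into Lebesgue's differentiation theorem, which is available under standing assumption {\bf II)}. Note that $F_M$ is automatically locally integrable on $\wh{\mathcal{G}}^{\,\Hil}_{\epsilon_0}$: the above inequality implies $\int_{B(\xi_0,\epsilon)}F_M\,d\nu_{\wh{G}}\leq B\,\nu_{\wh{G}}(B(\xi_0,\epsilon))<\infty$. Hence for $\nu_{\wh{G}}$-a.e.\ $\xi_0\in\wh{\mathcal{G}}^{\,\Hil}_{\epsilon_0}$ the averages on $B(\xi_0,\epsilon)$ converge to $F_M(\xi_0)$ as $\epsilon\to 0$, giving $F_M(\xi_0)\leq B$ off a $\nu_{\wh{G}}$-null set $N_M$.

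Finally, choose an increasing sequence $M_n\to\infty$ and set $N=\bigcup_n N_{M_n}$, still $\nu_{\wh{G}}$-null. Because $L:H\to\R^+$ is finite valued, $H_{M_n}\uparrow H$, so monotone convergence in $\varsigma$ yields $F_{M_n}(\xi_0)\uparrow\mathcal{C}_\psi(\xi_0)$, and therefore $\mathcal{C}_\psi(\xi_0)\leq B$ for every $\xi_0\in\wh{\mathcal{G}}^{\,\Hil}_{\epsilon_0}\setminus N$. The only delicate point is ensuring that the differentiation theorem applies (hence the need for the truncation $H_M$, which guarantees local integrability through the Bessel bound itself); after that, the passage $M\to\infty$ is routine via $\sigma$-additivity of null sets and monotone convergence.
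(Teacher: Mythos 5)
Your proposal is correct and follows essentially the same route as the paper: test the Bessel bound on $f^{\xi_0}_\epsilon$, use Lemma \ref{lem:Lipschitz} to identify $\mathcal{I}_{\Gamma,H_M}(f^{\xi_0}_\epsilon,\psi)$ with a ball average of $F_M$, apply Lebesgue differentiation as $\epsilon\to 0$, and then let $M\to\infty$ by monotone convergence. Your added care in extracting a countable sequence $M_n\to\infty$ so that the exceptional sets union to a null set is a detail the paper leaves implicit, but it is the same argument.
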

\begin{proof}
By Lemma \ref{lem:Lipschitz} the function $\xi \mapsto \int_{H_M} |\wh{\psi}(\dalpha_h(\xi))|^2 d\varsigma(h)$ belongs to $L^1_\textnormal{loc}(\wh{\mathcal{G}}^{\,\Hil}_{\epsilon_0})$. Thus, by applying Lebesgue's differentiation theorem at the limit for $\epsilon \to 0$ and then taking the limit for $M \to \infty$ in (\ref{eq:HM}), we get
\[
\lim_{M \to \infty} \lim_{\epsilon \to 0} \mathcal{I}_{\Gamma,H_M}(f^{\xi_0}_\epsilon,\psi) = \mathcal{C}_\psi(\xi_0) \quad \textnormal{for } \nu_{\wh{G}}\textnormal{-a.e. } \xi_0 \in \wh{\mathcal{G}}^{\,\Hil}_{\epsilon_0}.
\]
The result follows because, for all $\epsilon > 0$ and all $M > 0$. we have 
\[
\mathcal{I}_{\Gamma,H_M}(f^{\xi_0}_\epsilon,\psi) \leq \mathcal{I}_{\Gamma,H}(f^{\xi_0}_\epsilon,\psi) \leq B. \qedhere
\]
\end{proof}

\subsection{Main Theorem}\label{sec:below}

In order to obtain the bound from below of Calder\'on's sum for frames, we will need a technical assumption on the estimate of the number of lattice points inside a metric ball deformed by the family of automorphisms. A similar estimate is the central object of study in the recent paper \cite{BownikLemvig} dealing with affine wavelets in $\R^n$. Its role for the study of affine systems was also noted in \cite[Lemma 5.11]{HLW} and, for LCA groups, in \cite[Lemma 4.11]{KutyniokLabate}, but it could be found in disguise even in \cite[page 271]{ChuiShi}.

% \begin{definition}\label{def:PropertyX}
% We say that a measurably bi-Lipschitz family $\{\dalpha_h\}_{h \in H}$ of $\wh{G}$-automorphisms  has \emph{Property $\textnormal{X}$} if there exist $\epsilon_0 > 0$, $M_0 > 0$ and $\mathrm{c} > 0$ such that for all $M > M_0$ there exists a sequence $\{C_\epsilon\}_{0 < \epsilon < \epsilon_0} \subset \R$ satisfying
% \begin{equation}\label{eq:PropertyX}
% \sharp \Big(\Gamma^\perp \cap \dalpha_h B(\id,\epsilon)\Big) \leq 1 + C_\epsilon \delta(h) \quad \forall \ \epsilon < \epsilon_0
% \end{equation}
% for all $h \in H_{M}^c = \{h \in H \, : \, L(h) > M\}$, and
% \begin{equation}\label{eq:PropertyXII}
% \limsup_{\epsilon \to 0} C_\epsilon \leq \mathrm{c} < \infty.
% \end{equation}
% \end{definition}

\begin{definition}\label{def:PropertyX}
Let $\Gamma$ be a cocompact subgroup of $G$. We say that a measurably bi-Lipschitz family $\{\dalpha_h\}_{h \in H}$ of $\wh{G}$-automorphisms has \emph{Property $\textnormal{X}$} for $\Gamma^\perp$ if there exist $r > 0$, $M > 0$, and a constant $C > 0$ such that
\begin{equation}\label{eq:PropertyX}
\sharp \Big(\Gamma^\perp \cap \dalpha_h B(\id,r)\Big) \leq 1 + C \delta(h)
\end{equation}
for all $h \in H_{M}^c = \{h \in H \, : \, L(h) > M\}$.
\end{definition}

% \begin{remark}
% The sequence $\{C_\epsilon\}_{0 < \epsilon < \epsilon_0} \subset \R$ in the previous definition may in general depend on $M$. Moreover, the estimate (\ref{eq:PropertyX}) takes into account that at least the lattice point $\id \in \Gamma^\perp \cap \dalpha_h B(\id,\epsilon)$ has to be counted.
% \end{remark}

\begin{remark}
Observe that the estimate (\ref{eq:PropertyX}) takes into account that at least the lattice point $\id \in \Gamma^\perp \cap \dalpha_h B(\id,\epsilon)$ has to be counted. Moreover, if (\ref{eq:PropertyX}) holds for a given constant $C$, then it holds with the same constant $C$ for all $r' < r$ and all $M' > M$.
\end{remark}

In the following sections we will deduce Property X for relevant systems from our counting estimate of Lemma \ref{lem:counting}. However, a simple paradigmatic example of a setting where it does not hold may be considered now.

\begin{example}\label{ex:BAD}
Let $\wh{G} = \R^2$, let $\Gamma^\perp = \Z^2$, let $H = \Z$ and let $\alpha_j = \binom{2 \ \, 0}{0 \ \frac12}^j$. Then $\delta(j) = \det(\alpha_j) = 1$ and $H_M^c = \{|j| > \log_2 M\}$. That (\ref{eq:PropertyX}) does not hold can be seen because, for any fixed $\epsilon$, for large values of $j$ the deformed balls are allowed to contain an arbitrarily large number of lattice points.
\end{example}

The main step in order to obtain the lower bound consists of obtaining an approximation to the full Calder\'on's sum, up to a remainder, from the representation formula (\ref{eq:ChuiShi}).

\begin{proposition}\label{prop:remainder}
Let $\mathcal{A}_{\Gamma,H}(\psi)$ be a $\varsigma$-frame of $\Hil \subset L^2(G)$  with constants $0 < A \leq B < \infty$, and let $\{\dalpha_h\}_{h \in H}$ have Property X. Then there exist an $\epsilon_0 > 0$ and an $M_0 > 0$ such that
\[
A \leq \frac{1}{\nu_{\wh{G}}(B(\xi_0,\epsilon))}\int_{B(\xi_0,\epsilon)} \int_{H} |\wh{\psi}(\dalpha_h(\xi))|^2 d\varsigma(h) d\nu_{\wh{G}}(\xi) + R_M^\psi(\epsilon,\xi_0)
\]
for all $\xi_0 \in \wh{\mathcal{G}}^{\,\Hil}_{\epsilon_0}$, all $\epsilon < \epsilon_0$, and all $M > M_0$, with
\begin{equation}\label{eq:remainder}
%R_M^\psi(\epsilon,\xi_0) = \frac{C_\epsilon}{\nu_{\wh{G}} (B(\xi_0,\epsilon))}\int_{H_M^c}\int_{\dalpha_hB(\xi_0,\epsilon)} |\wh{\psi}(\xi)|^2 d\nu_{\wh{G}}(\xi) d\varsigma(h) .
R_M^\psi(\epsilon,\xi_0) = \frac{C}{\nu_{\wh{G}} (B(\xi_0,\epsilon))}\int_{H_M^c} \int_{B(\xi_0,\epsilon)} |\wh{D}(h)\wh{\psi}(\xi)|^2 d\nu_{\wh{G}}(\xi) d\varsigma(h)
\end{equation}
where $C$ is given by (\ref{eq:PropertyX}) and $\wh{D}$ was defined in (\ref{eq:dualdilations}).
\end{proposition}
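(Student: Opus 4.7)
The plan is to apply the lower frame inequality to the unit-norm test function $f_\epsilon^{\xi_0}$, which gives $A \leq \mathcal{I}_{\Gamma,H}(f_\epsilon^{\xi_0},\psi)$, and then split the domain of integration in the representation formula (\ref{eq:ChuiShi}) as $H = H_M \cup H_M^c$. On $H_M$ Lemma \ref{lem:Lipschitz} produces, for $\epsilon$ sufficiently small and $\xi_0 \in \wh{\mathcal{G}}^{\,\Hil}_{\epsilon_0}$, exactly the restriction of the main-term integral to $H_M$. The entire task is therefore to show that the $H_M^c$ portion of $\mathcal{I}_{\Gamma,H}(f_\epsilon^{\xi_0},\psi)$ is bounded above by the $H_M^c$-restriction of the main term plus the remainder $R_M^\psi(\epsilon,\xi_0)$; this is where Property X is essential, and is the main obstacle.

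For the $H_M^c$ portion I would apply Cauchy-Schwarz to the inner sum of (\ref{eq:ChuiShi}): for each fixed $(h,\xi)$,
\[
\Big|\sum_{\lambda \in \Gamma^\perp} \one{\dalpha_h B(\xi_0,\epsilon)}(\xi+\lambda) \ol{\wh\psi(\xi+\lambda)}\Big|^2 \leq N(h,\xi) \sum_{\lambda \in \Gamma^\perp} \one{\dalpha_h B(\xi_0,\epsilon)}(\xi+\lambda) |\wh\psi(\xi+\lambda)|^2,
\]
where $N(h,\xi)$ counts the $\lambda \in \Gamma^\perp$ with $\xi+\lambda \in \dalpha_h B(\xi_0,\epsilon)$. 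Mimicking claim (\ref{eq:trickclaim}) in the proof of Lemma \ref{lem:counting}, if $\lambda_0$ is any such lattice point then every other admissible $\lambda$ satisfies $\lambda - \lambda_0 \in \dalpha_h B(\id,2\epsilon)$, by the triangle inequality and the invariance of $\dg$, so $N(h,\xi) \leq \sharp(\Gamma^\perp \cap \dalpha_h B(\id,2\epsilon))$. Choosing $\epsilon_0 < r/2$ and $M_0 = M$ for $r, M, C$ as in Definition \ref{def:PropertyX}, Property X yields $N(h,\xi) \leq 1 + C\delta(h)$ uniformly in $\xi$ and in $h \in H_M^c$ for any $M > M_0$.

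With this uniform bound in hand, the rest is bookkeeping. Part $i.$ of Lemma \ref{lem:Weil} unfolds the sum over $\lambda$ on $\Omega$ into an integral over $\wh G$, producing $\int_{\dalpha_h B(\xi_0,\epsilon)} |\wh\psi|^2 d\nu_{\wh G}$, and a $\dalpha_h$-change of variable (Lemma \ref{lem:changeofvar}) rewrites this as $\delta(h) \int_{B(\xi_0,\epsilon)} |\wh\psi(\dalpha_h\xi)|^2 d\nu_{\wh G}(\xi)$. Together with the volume identity $\nu_{\wh G}(\dalpha_h B(\xi_0,\epsilon)) = \delta(h) \nu_{\wh G}(B(\xi_0,\epsilon))$ of Remark \ref{rem:measureballdilations}, the prefactor $(1+C\delta(h))/\nu_{\wh G}(\dalpha_h B(\xi_0,\epsilon))$ splits: the $1$ term cancels the $\delta(h)$ in the denominator and reproduces exactly the $H_M^c$-restriction of the main term, while the $C\delta(h)$ term combines with $|\wh\psi(\dalpha_h\xi)|^2$ via (\ref{eq:dualdilations}) to yield $|\wh D(h)\wh\psi(\xi)|^2$, giving $R_M^\psi(\epsilon,\xi_0)$ after integration against $\varsigma$ (Fubini being justified by $\sigma$-finiteness and positivity). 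Adding the $H_M$ contribution from Lemma \ref{lem:Lipschitz} then recombines to the full $\int_H$ in the main term, producing the claimed inequality. The only delicate step is the uniform control of $N(h,\xi)$ on $H_M^c$, which is precisely the content of Property X.
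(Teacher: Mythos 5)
Your proposal is correct and follows essentially the same route as the paper: the same decomposition $H = H_M \sqcup H_M^c$, Lemma \ref{lem:Lipschitz} on $H_M$, and Cauchy--Schwarz plus the multiplicity bound $N(h,\xi)\le\sharp\big(\Gamma^\perp\cap\dalpha_h B(\id,2\epsilon)\big)\le 1+C\delta(h)$ from Property X on $H_M^c$, followed by periodization and the change of variables splitting the factor $1+C\delta(h)$ into the main term and the remainder. The only cosmetic difference is that you bound the multiplicity uniformly in $\xi$ by a direct difference-of-lattice-points argument before periodizing, whereas the paper periodizes first and then bounds it for $\xi$ in the deformed ball via overlapping translates; the underlying triangle-inequality computation is the same.
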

\begin{proof}
Since $H = H_M \sqcup H_M^c$, then
\begin{equation}\label{eq:simplesum}
\mathcal{I}_{\Gamma,H}(f^{\xi_0}_\epsilon,\psi) = \mathcal{I}_{\Gamma,H_M}(f^{\xi_0}_\epsilon,\psi) + \mathcal{I}_{\Gamma,H_M^c}(f^{\xi_0}_\epsilon,\psi).
\end{equation}
The first term can be treated by Lemma \ref{lem:Lipschitz}, so for some $\epsilon_0 > 0$ we have
\[
\mathcal{I}_{\Gamma,H_M}(f^{\xi_0}_\epsilon,\psi) = \frac{1}{\nu_{\wh{G}}(B(\xi_0,\epsilon))}\int_{B(\xi_0,\epsilon)} \int_{H_M} |\wh{\psi}(\dalpha_h(\xi))|^2 d\varsigma(h) d\nu_{\wh{G}}(\xi)
\]
for all $\epsilon < \epsilon_0$. Let us then focus on the second term. By Cauchy-Schwarz inequality we can write, for the inner summation in (\ref{eq:ChuiShi}),
\[
|\sum_{\lambda \in \Gamma^\perp} \one{\dalpha_hB(\xi_0,\epsilon)}(\xi + \lambda) \ol{\wh{\psi}(\xi + \lambda)}|^2 \leq S_\epsilon(h,\xi) \sum_{\lambda \in \Gamma^\perp} \one{\dalpha_hB(\xi_0,\epsilon)}(\xi + \lambda) |\wh{\psi}(\xi + \lambda)|^2
\]
where
\[
S_\epsilon(h,\xi) = \sum_{\lambda \in \Gamma^\perp} \one{\dalpha_hB(\xi_0,\epsilon)}(\xi + \lambda) .
\]
Note that this is actually a finite sum. Moreover, for all $\lambda \in \Gamma^\perp$ we have $S_\epsilon(h,\xi) = S_\epsilon(h,\xi+\lambda)$, so by $i.$, Lemma \ref{lem:Weil} we get
\begin{align*}
& \mathcal{I}_{\Gamma,H_M^c}(f^{\xi_0}_\epsilon,\psi) \leq
% \mathcal{J}_{\Gamma,H_M^c}(f^{\xi_0}_\epsilon,\psi)\\ & =
\int_{H_M^c}\int_\Omega S_\epsilon(h,\xi) \sum_{\lambda \in \Gamma^\perp} \one{\dalpha_hB(\xi_0,\epsilon)}(\xi + \lambda) |\wh{\psi}(\xi + \lambda)|^2 \frac{d\nu_{\wh{G}}(\xi)d\varsigma(h)}{\nu_{\wh{G}} (\dalpha_hB(\xi_0,\epsilon))}\\
& = \int_{H_M^c}\int_\Omega \sum_{\lambda \in \Gamma^\perp} S_\epsilon(h,\xi+\lambda)\one{\dalpha_hB(\xi_0,\epsilon)}(\xi + \lambda) |\wh{\psi}(\xi + \lambda)|^2  \frac{d\nu_{\wh{G}}(\xi) d\varsigma(h)}{\nu_{\wh{G}} (\dalpha_hB(\xi_0,\epsilon))}\\
& = \int_{H_M^c}\int_{\wh{G}} S_\epsilon(h,\xi)\one{\dalpha_hB(\xi_0,\epsilon)}(\xi) |\wh{\psi}(\xi)|^2  \frac{d\nu_{\wh{G}}(\xi) d\varsigma(h)}{\nu_{\wh{G}} (\dalpha_hB(\xi_0,\epsilon))}\\
& = \int_{H_M^c}\int_{\dalpha_hB(\xi_0,\epsilon)} S_\epsilon(h,\xi) |\wh{\psi}(\xi)|^2 \frac{d\nu_{\wh{G}}(\xi) d\varsigma(h)}{\nu_{\wh{G}} (\dalpha_hB(\xi_0,\epsilon))} .
\end{align*}
By the group invariance of the metric $\dg$, when $\xi \in \dalpha_hB(\xi_0,\epsilon)$ we have
\begin{align}\label{eq:estimateV}
S_\epsilon(h,\xi) & \leq \sharp \Big\{\lambda \in \Gamma^\perp \, : \, \dalpha_hB(\xi_0,\epsilon) \cap (\dalpha_hB(\xi_0,\epsilon) + \lambda) \neq \emptyset\Big\}\nonumber\\
& = \sharp \Big\{\lambda \in \Gamma^\perp \, : \, \dalpha_hB(\xi_0,\epsilon) \cap \dalpha_hB(\xi_0 + \dalpha_h^{\, -1}(\lambda),\epsilon) \neq \emptyset\Big\}\nonumber\\
& = \sharp \Big\{\lambda \in \Gamma^\perp \, : \, B(\xi_0,\epsilon) \cap B(\xi_0 + \dalpha_h^{\, -1}(\lambda),\epsilon) \neq \emptyset\Big\}\nonumber\\
& \leq \sharp \Big\{\lambda \in \Gamma^\perp \, : \, \dg(\dalpha_h^{\, -1}(\lambda),\id) < 2\epsilon \Big\} = \sharp \Big(\dalpha_h^{\, -1} \Gamma^\perp \,\cap\, B(\id,2\epsilon)\Big)\nonumber\\
& = \sharp \Big(\dalpha_h^{\, -1} \big(\Gamma^\perp \,\cap\, \dalpha_hB(\id,2\epsilon)\big)\Big) = \sharp \Big(\Gamma^\perp \,\cap\, \dalpha_hB(\id,2\epsilon)\Big)\nonumber\\
% & = 1 + \sharp \Big((\Gamma^\perp\smallsetminus\{\id\}) \,\cap\, \dalpha_hB(\id,2\epsilon)\Big)\nonumber\\
% & \leq 1 + \frac{1}{\omega(2M\epsilon)} \nu_{\wh{G}}(\dalpha_h B(e,4\epsilon))
& \leq 1 + C \delta(h)
\end{align}
where the last inequality is Property X.

From (\ref{eq:simplesum}) and (\ref{eq:estimateV}) and arguing as in Lemma \ref{lem:Lipschitz}, we have then obtained
\[
\mathcal{I}_{\Gamma,H}(f^{\xi_0}_\epsilon,\psi) \leq \frac{1}{\nu_{\wh{G}}(B(\xi_0,\epsilon))}\int_{B(\xi_0,\epsilon)} \int_{H} |\wh{\psi}(\dalpha_h(\xi))|^2 d\varsigma(h) d\nu_{\wh{G}}(\xi) + R_M^\psi(\epsilon,\xi_0)
\]
where the remainder term reads
\[
R_M^\psi(\epsilon,\xi_0) = C \int_{H_M^c} \int_{\dalpha_hB(\xi_0,\epsilon)} |\wh{\psi}(\xi)|^2 \frac{\delta(h)}{\nu_{\wh{G}} (\dalpha_hB(\xi_0,\epsilon))} d\nu_{\wh{G}}(\xi) d\varsigma(h)
\]
which coincides with (\ref{eq:remainder}) by Remark \ref{rem:measureballdilations}.
\end{proof}

We are now ready to prove our main theorem, which provides sharp bounds for the Calder\'on's sum as the frame bounds. For this, we require an hypothesis of local integrability for a term which, whenever $\delta \neq 1$, differs from the Calder\'on's sum by such a factor. The role of this hypothesis has been discussed in a recent paper concerning generalized shift-invariant systems on the real line \cite{ChristensenHasannasabLemvig}, where the authors prove that it is strictly related to the so-called LIC condition introduced in \cite{HLW} and $\alpha$-LIC condition introduced in \cite{JakobsenLemvigTranslations}. In the present setting, it is  the minimal assumption needed to make use of Fubini's theorem and prove that the remainder vanishes in the limit. In the next section, we will see that this local integrability condition holds for rather general classes of automorphisms.
\begin{theorem}\label{theo:main}
% Let $\mathcal{A}_{\Gamma,H}(\psi)$ be a $\varsigma$-frame of $\Hil \subset L^2(G)$ with constants $0 < A \leq B < \infty$, and suppose that $\{\dalpha_h\}_{h \in H}$ have Property X. If
Let $\mathcal{A}_{\Gamma,H}(\psi)$ be a $\varsigma$-frame of $\Hil \subset L^2(G)$ with constants $0 < A \leq B < \infty$, and suppose that $\{\dalpha_h\}_{h \in H}$ is a measurably bi-Lipschitz family of automorphisms, with upper Lipschitz constants $\{L(h)\}_{h \in H}$, satisfying Property X. If there exist $M>0$ and $\epsilon_0 > 0$ such that
\begin{equation}\label{eq:localintegrability}
\Psi_M(\xi) = \int_{H_M^c} |\wh{D}(h)\wh{\psi}(\xi)|^2 d\varsigma(h) \ \in L^1_\textnormal{loc}(\wh{\mathcal{G}}^{\,\Hil}_{\epsilon_0}\smallsetminus \mathcal{O})
\end{equation}
for some zero-measure set $\mathcal{O}$, where $H_M^c = \{h \in H \, : \, L(h) > M\}$ and $\wh{\mathcal{G}}^{\,\Hil}_{\epsilon_0}$ is as in Definition \ref{def:Gset}, then
\[
A \leq \mathcal{C}_\psi(\xi) \leq B \quad \textnormal{for} \ \nu_{\wh{G}}-\textnormal{a.e.} \ \xi \in \wh{\mathcal{G}}^{\,\Hil}_{\epsilon_0}.
\]
\end{theorem}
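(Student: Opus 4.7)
The upper bound $\mathcal{C}_\psi \leq B$ is exactly the content of Proposition \ref{prop:Bessel}, so all the work consists in establishing the lower bound. The natural starting point is the inequality from Proposition \ref{prop:remainder}, valid for $\xi_0 \in \wh{\mathcal{G}}^{\,\Hil}_{\epsilon_0}$, $\epsilon < \epsilon_0$, and $M > M_0$:
\[
A \leq \frac{1}{\nu_{\wh{G}}(B(\xi_0,\epsilon))}\int_{B(\xi_0,\epsilon)} \int_{H} |\wh{\psi}(\dalpha_h(\xi))|^2 d\varsigma(h) d\nu_{\wh{G}}(\xi) + R_M^\psi(\epsilon,\xi_0).
\]
The strategy is to first send $\epsilon \to 0$, obtaining a pointwise a.e.\ bound by $\mathcal{C}_\psi(\xi_0) + C \Psi_M(\xi_0)$, and then send $M \to \infty$ to kill the remainder.

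For the main term, Tonelli's theorem (applicable by standing assumption III, since the integrand is nonnegative and $\varsigma$ is $\sigma$-finite) identifies the inner $\varsigma$-integral with $\mathcal{C}_\psi(\xi)$, so that the main term becomes a $\dg$-ball average of $\mathcal{C}_\psi$. Since Proposition \ref{prop:Bessel} already gives $\mathcal{C}_\psi \leq B$ a.e., the function $\mathcal{C}_\psi$ is locally integrable on $\wh{\mathcal{G}}^{\,\Hil}_{\epsilon_0}$, and the weak doubling hypothesis \eqref{eq:Vitali} guarantees a Lebesgue's differentiation theorem in $(\wh{G},\dg,\nu_{\wh{G}})$, so that this average converges to $\mathcal{C}_\psi(\xi_0)$ at $\nu_{\wh{G}}$-a.e.\ $\xi_0 \in \wh{\mathcal{G}}^{\,\Hil}_{\epsilon_0}$. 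For the remainder, Tonelli gives
\[
R_M^\psi(\epsilon,\xi_0) = \frac{C}{\nu_{\wh{G}}(B(\xi_0,\epsilon))}\int_{B(\xi_0,\epsilon)} \Psi_M(\xi) d\nu_{\wh{G}}(\xi),
\]
and the local integrability hypothesis \eqref{eq:localintegrability} is exactly what allows a second application of Lebesgue's differentiation, yielding $\lim_{\epsilon \to 0} R_M^\psi(\epsilon,\xi_0) = C \Psi_M(\xi_0)$ at every Lebesgue point of $\Psi_M$. Intersecting the two full-measure sets produced by the differentiation theorems, we conclude that for a.e.\ $\xi_0 \in \wh{\mathcal{G}}^{\,\Hil}_{\epsilon_0}\smallsetminus \mathcal{O}$ and every $M$ large enough one has $A \leq \mathcal{C}_\psi(\xi_0) + C\Psi_M(\xi_0)$.

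It remains to pass to the limit $M \to \infty$. Since $H_M = \{L < M\}$ increases in $M$, the sets $H_M^c$ decrease, so $M \mapsto \Psi_M(\xi)$ is nonincreasing and has a pointwise limit equal to the integral of $|\wh{D}(h)\wh{\psi}(\xi)|^2$ over $\bigcap_M H_M^c$; because $L$ is $\varsigma$-measurable and $\R^+$-valued, this intersection is $\varsigma$-null, so the limit is zero for every $\xi$. Fixing a $\xi_0$ outside the exceptional sets from the previous step and for which some $\Psi_{M_1}(\xi_0)$ is finite (which excludes only a $\nu_{\wh{G}}$-null set by local integrability), monotone convergence gives $\Psi_M(\xi_0)\downarrow 0$, and the lower bound $A \leq \mathcal{C}_\psi(\xi_0)$ follows. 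The main obstacle, and the reason hypothesis \eqref{eq:localintegrability} is needed, is precisely the passage to the limit in the remainder: without $\Psi_M \in L^1_\textnormal{loc}$, neither the Lebesgue-point identification $R_M^\psi(\epsilon,\xi_0) \to C\Psi_M(\xi_0)$ nor the pointwise a.e.\ finiteness of $\Psi_M$ needed to apply monotone convergence would be available.
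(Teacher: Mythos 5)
Your proposal is correct and follows essentially the same route as the paper: upper bound from Proposition \ref{prop:Bessel}, lower bound by applying Lebesgue differentiation to both terms of the inequality in Proposition \ref{prop:remainder} (using \eqref{eq:localintegrability} via Fubini--Tonelli for the remainder), and then showing $\Psi_M(\xi_0)\to 0$ as $M\to\infty$. The only cosmetic difference is in that last step, where the paper decomposes $H_M^c$ into the slices $\{n<L(h)\le n+1\}$ and observes that the tail of a convergent series vanishes, while you invoke continuity from above of the integral over the decreasing sets $H_M^c$; these are the same argument, both hinging on the a.e.\ finiteness of $\Psi_M$ supplied by the local integrability hypothesis.
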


\begin{proof}
The upper bound is provided by Proposition \ref{prop:Bessel}. For the lower bound, if (\ref{eq:localintegrability}) holds, then by Fubini's theorem the remainder term (\ref{eq:remainder}) reads
\[
R_M^\psi(\epsilon,\xi_0) = \frac{C}{\nu_{\wh{G}} (B(\xi_0,\epsilon))}\int_{B(\xi_0,\epsilon)} \Psi_M(\xi) d\nu_{\wh{G}}(\xi) .
\]
% By Lebesgue's differentiation theorem we then get
% \[
% \lim_{\epsilon \to 0} R_M^\psi(\epsilon,\xi_0) = C \Psi_M(\xi_0) \quad \textnormal{a.e.} \ \xi_0 \in \wh{\mathcal{G}}^{\,\Hil}_{\epsilon_0}.
% \]
Since, by Proposition \ref{prop:remainder}, we have
\[
A \leq \frac{1}{\nu_{\wh{G}}(B(\xi_0,\epsilon))}\int_{B(\xi_0,\epsilon)} \int_{H} |\wh{\psi}(\dalpha_h(\xi))|^2 d\varsigma(h) d\nu_{\wh{G}}(\xi) + R_M^\psi(\epsilon,\xi_0)
\]
and we can apply Lebesgue's differentiation theorem to both terms, by taking the $\lim$ for $\epsilon \to 0$ we get
\[
A \leq \mathcal{C}_\psi(\xi_0) + C \,\Psi_{M}(\xi_0) \quad \textnormal{a.e.} \ \xi_0 \in \wh{\mathcal{G}}^{\,\Hil}_{\epsilon_0}.
\]
Since the only term depending on $M$ is $\Psi_{M}$, the proof is concluded by showing that this quantity vanishes when $M$ tends to infinity. Note that if condition (\ref{eq:localintegrability}) holds for a given $M$, it holds for all larger values of $M$.

In order to see that, let $M \in \mathbb{N}$, so that can write $H_M^c$ as the disjoint union
\[
H_M^c = \bigsqcup_{n=M}^\infty \{h \in H \, : \, n < L(h) \leq n+1\} = \bigsqcup_{n=M}^\infty \Sigma_n .
\]
Denoting by $\phi_n(\xi_0) = \displaystyle\int_{\Sigma_n} |\wh{D}(h)\wh{\psi}(\xi_0)|^2 d\varsigma(h)$, we then get
\[
\Psi_M(\xi_0) = \sum_{n=M}^\infty \phi_n(\xi_0) .
\]
By (\ref{eq:localintegrability}), for a.e. $\xi_0 \in \wh{\mathcal{G}}^{\,\Hil}_{\epsilon_0}$ we have that $\Psi_M(\xi_0)$ is finite. So, in particular, each term $\phi_n(\xi_0)$ is finite, and $\lim_{n \to \infty} \phi_n(\xi_0) = 0$ for a.e. $\xi_0 \in \wh{\mathcal{G}}^{\,\Hil}_{\epsilon_0}$. Thus
\[
\lim_{M \to \infty} \Psi_{M}(\xi_0) = 0
\]
concluding the proof.
\end{proof}

\section{Expanding automorphisms}

In this section we will consider $\Hil$ to be the whole $L^2(G)$, so that $\wh{\mathcal{G}}^{\,\Hil}_{\epsilon_0} = \wh{G}$ for any $\epsilon_0 > 0$. We will make use of a weak notion of expanding automorphisms, which we introduce in terms of the Lipschitz constants defined in (\ref{eq:metric}).
\begin{definition}\label{def:expanding}
A measurably bi-Lipschitz family of $\wh{G}$-auto\-morphisms $\{\dalpha_h\}_{h \in H}$, with bi-Lipschitz constants $\{\ell(h)\}_{h \in H}$ and $\{L(h)\}_{h \in H}$, is called \emph{expanding} if
% for all $N>0$ there exist $M > 0$ such that
% \[
% L(h) > M \Rightarrow \ell(h) > N .
% \]
there exist $M_0, N_0 > 0$ such that, for all $M > M_0$
\[
L(h) > M \Rightarrow \ell(h) > N_0 .
\]
We say that it is \emph{uniformly expanding} if there exist an $M > 0$ and a monotone increasing function $f : \R^+ \to \R^+$ such that
\[
L(h) > M \Rightarrow L(h) \leq f(\ell(h)) .
\] 
In particular, if $\{\dalpha_h\}_{h \in H}$ is uniformly expanding, then it is expanding.
\end{definition}
This notion of expanding imposes that $\ell(h)$ can not be arbitrarily small when $L(h)$ is arbitrarily large. This means that any automorphism of the family can not put near two points in the space while moving away two other points at arbitrary distances. It thus rules out cases such as that of Example \ref{ex:BAD}. This general formulation is compatible with the more classical ones considered in \cite{HLW}, \cite{GuoLabate} and \cite{BownikLemvig}, in the sense that the expanding matrices of these works are expanding for the current definition, see Proposition \ref{prop:expandingsubspace}.

\begin{remark}
This notion of expanding does not require the deformation ratio $\displaystyle{\frac{L(h)}{\ell(h)}}$ to be  bounded on $H$, as this would exclude relevant cases such as anisotropic dilations. This ratio, which defines the quasiconformality (see e.g. \cite[\S 14.2]{Heinonen}) of $\dalpha_h$, is indeed allowed to be arbitrarily large with $h$ in $H$.
\end{remark}

\subsection{Property X and Calder\'on's bounds}
We show here that the introduced notion of expanding automorphisms allows us to obtain Property X from the counting Lemma \ref{lem:counting}.

\begin{theorem}\label{theo:expandingX}
Let $\{\dalpha_h\}_{h \in H}$ be an expanding family of $\wh{G}$-auto\-morphisms. Then, for all $\Gamma$ cocompact subgroup of $G$, it has Property X for $\Gamma^\perp$.
\end{theorem}
\begin{proof}
Let $\Gamma$ be a cocompact subgroup of $G$.
We introduce the following shorthand notation derived from (\ref{eq:neighborhood}): if $\Omega$ is a fundamental domain for $\Gamma^\perp$ in $\wh{G}$, and $r > 0$, let us call
\begin{equation}\label{eq:littleomega}
\omega(r) = \nu_{\wh{G}} \left(\Omega^r_{\mathbf{1}}\right) = \nu_{\wh{G}} \Big(\Omega \cap \bigcup_{\lambda \in \Gamma^\perp} B(\lambda,r)\Big) .
\end{equation}
By (\ref{eq:balls}) and Lemma \ref{lem:counting}, and recalling Remark \ref{rem:measureballdilations}, we have that for all $N > 0$
\[
\sharp \Big(\Gamma^\perp \cap \dalpha_h B(\id,r)\Big) \leq \frac{\nu_{\wh{G}}(B(\id,2r))}{\omega(Nr)}\delta(h) \quad \forall \ r > 0 \, , \ \forall \ h \in K_N.
\]
where $K_N = \{h \in H \, : \, \ell(h) > N \}$. 
%Let us call $Q_\epsilon = \displaystyle\frac{\nu_{\wh{G}}(B(\id,2\epsilon))}{\omega(N\epsilon)}$, and 
Let us observe now that we can choose $\epsilon$ and $N$ so that
\[
\omega(N\epsilon) = \nu_{\wh{G}}(B(\id,N\epsilon)).
\]
Indeed, without loss of generality assume that $\Omega$ contains a neighborhood of $\id \in \Gamma^\perp$. Then for $\epsilon$ sufficiently small we get $B(\id,\epsilon) \subset \Omega$, and no other ball $B(\lambda,\epsilon)$ in (\ref{eq:littleomega}), for $\lambda \neq \id$, intersects $\Omega$. Therefore, for all $h \in K_N$
\[
\sharp \Big(\Gamma^\perp \cap \dalpha_h B(\id,\epsilon)\Big) \leq \frac{\nu_{\wh{G}}(B(\id,2\epsilon))}{\nu_{\wh{G}}(B(\id,N\epsilon))}\delta(h) \leq 1 + \frac{\nu_{\wh{G}}(B(\id,2\epsilon))}{\nu_{\wh{G}}(B(\id,N\epsilon))} \delta(h).
\]
% \[
% \sharp \Big(\Gamma^\perp \cap \dalpha_h B(\id,\epsilon)\Big) \leq \frac{\nu_{\wh{G}}(B(\id,2\epsilon))}{\nu_{\wh{G}}(B(\id,N\epsilon))}\delta(h) = 1 + \left(\frac{\nu_{\wh{G}}(B(\id,2\epsilon))}{\nu_{\wh{G}}(B(\id,N\epsilon))} - \frac{1}{\delta(h)}\right)\delta(h).
% \]
% Notice that, since $h \in K_{N}$, we have that
% \[
% \delta(h) = \frac{\nu_{\wh{G}} (\dalpha_h B(\id,\epsilon))}{\nu_{\wh{G}} (B(\id,\epsilon))} \geq \frac{\nu_{\wh{G}} (B(\id,\ell(h)\epsilon))}{\nu_{\wh{G}} (B(\id,\epsilon))} \geq \frac{\nu_{\wh{G}} (B(\id,N\epsilon))}{\nu_{\wh{G}} (B(\id,\epsilon))}
% \]
% so
% \[
% \frac{\nu_{\wh{G}}(B(\id,2\epsilon))}{\nu_{\wh{G}}(B(\id,N\epsilon))} - \frac{1}{\delta(h)} \leq \frac{\nu_{\wh{G}}(B(\id,2\epsilon))}{\nu_{\wh{G}}(B(\id,N\epsilon))}.
% \]
Now, by the weak doubling property (\ref{eq:Vitali}), if we assume without loss of generality that $N = 2^{-q}$, we get
$$
\frac{\nu_{\wh{G}}(B(\id,2\epsilon))}{\nu_{\wh{G}}(B(\id,N\epsilon))} \leq C^{q+1} .
$$
Thus, we have proved that for any $\epsilon$ sufficiently small, and for any $N > 0$, there exists $C$ such that
$$
\sharp \Big(\Gamma^\perp \cap \dalpha_h B(\id,\epsilon)\Big) \leq 1 + C \delta(h)
$$
for all $h \in K_N$. The proof now follows as a consequence of the expanding property, which implies that there exists an $M > 0$ such that  $H_M^c = \{ h \in H \, : \, L(h) > M\} \subset K_{N}$.
\end{proof}

As a consequence of Theorems \ref{theo:main} and \ref{theo:expandingX} we get then the following Calder\'on's inequalities.

\begin{corollary}\label{cor:expandingCalderon}
Let $\mathcal{A}_{\Gamma,H}(\psi)$ be a $\varsigma$-frame of $L^2(G)$  with constants $0 < A \leq B < \infty$, and let $\{\dalpha_h\}_{h \in H}$ be an expanding family of bi-Lipschitz automorphisms, with upper Lipschitz constants $\{L(h)\}_{h \in H}$. If
\begin{equation}\label{eq:localintegrabilityII}
\Psi_M(\xi) = \int_{H_M^c} |\wh{D}(h)\wh{\psi}(\xi)|^2 d\varsigma(h) \ \in L^1_\textnormal{loc}(\wh{G}\smallsetminus \mathcal{O})
\end{equation}
for some zero-measure set $\mathcal{O}$ and some $M > 0$, where $H_M^c = \{ h \in H \, : \, L(h) > M\}$, then
\[
A \leq \mathcal{C}_\psi(\xi) \leq B \quad \textnormal{for} \ \nu_{\wh{G}}-\textnormal{a.e.} \ \xi \in \wh{G}.
\]
\end{corollary}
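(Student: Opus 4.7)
The plan is to verify that the hypotheses of Theorem \ref{theo:main} are satisfied in this setting, and then invoke it directly. Three checks are required: (i) the relevant test-set equals all of $\wh{G}$; (ii) the family $\{\dalpha_h\}_{h \in H}$ has Property X; (iii) the local integrability condition matches the one appearing in Theorem \ref{theo:main}.

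For (i), recall that the statement of Section 4 fixes $\Hil = L^2(G)$, so $\wh{\Hil} = L^2(\wh{G})$ contains $\one{B(\xi,\epsilon)}$ for every $\xi \in \wh{G}$ and every $\epsilon > 0$. Hence $\wh{\mathcal{G}}^{\,\Hil}_{\epsilon_0} = \wh{G}$ for every $\epsilon_0 > 0$, which is the set on which Theorem \ref{theo:main} provides its conclusion. For (ii), the hypothesis of Corollary \ref{cor:expandingCalderon} is precisely that $\{\dalpha_h\}_{h \in H}$ is expanding in the sense of Definition \ref{def:expanding}, so Theorem \ref{theo:expandingX} applies and yields Property X.

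For (iii), the integrability assumption (\ref{eq:localintegrabilityII}) stated in the corollary coincides verbatim with (\ref{eq:localintegrability}) once we substitute $\wh{\mathcal{G}}^{\,\Hil}_{\epsilon_0} = \wh{G}$, so no additional verification is needed. With (i), (ii), (iii) in hand, Theorem \ref{theo:main} directly delivers the pointwise inequality
\[
A \leq \mathcal{C}_\psi(\xi) \leq B \quad \text{for } \nu_{\wh{G}}\text{-a.e.\ } \xi \in \wh{G},
\]
which is the claim. There is no real obstacle here: the only nontrivial work was already done in Theorems \ref{theo:main} and \ref{theo:expandingX}, and the proof of the corollary is the observation that their hypotheses, taken together, are exactly what is assumed here.
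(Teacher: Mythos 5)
Your proof is correct and follows exactly the route the paper intends: the paper states this corollary as an immediate consequence of Theorems \ref{theo:main} and \ref{theo:expandingX}, having already fixed $\Hil = L^2(G)$ (so that $\wh{\mathcal{G}}^{\,\Hil}_{\epsilon_0} = \wh{G}$) at the start of the section. Your three checks are precisely the observations needed, and nothing further is required.
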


\subsection{Sufficient conditions for local integrability}
We provide here a sufficient condition for the local integrability (\ref{eq:localintegrabilityII}) in the case of a uniformly expanding family of automorphisms.
\begin{theorem}\label{theo:sufficient}
Let $\{\dalpha_h\}_{h \in H}$ be uniformly expanding. If $\,\exists\, M>0$ such that
\begin{equation}\label{eq:sufficient}
u_c(t) = \varsigma\left(\left\{h \in H \, : \, t \leq L(h) \leq f(c t)\right\}\right) \in L^\infty\big([M,+\infty)\big)
\end{equation}
for all $c > 1$, then (\ref{eq:localintegrabilityII}) is satisfied by all $\psi \in L^2(G)$, with $\mathcal{O}=\{\id\}$.
\end{theorem}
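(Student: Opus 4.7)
The plan is to interchange the order of integration in $\int_K \Psi_M \, d\nu_{\wh{G}}$ so that the hypothesis (\ref{eq:sufficient}) reduces to a uniform pointwise estimate on the $\varsigma$-measure of the fiber of admissible dilations $h$. Fix a compact $K \subset \wh{G} \smallsetminus \{\id\}$; since $K$ is compact and bounded away from $\id$, it sits inside a closed $\dg$-annulus $\{\xi : r_1 \leq \dg(\xi,\id) \leq r_2\}$ with $0 < r_1 < r_2$, and I set $c_0 := r_2/r_1 > 1$.

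Using $|\wh{D}(h)\wh{\psi}(\xi)|^2 = \delta(h)|\wh{\psi}(\dalpha_h(\xi))|^2$ together with the change-of-variables Lemma \ref{lem:changeofvar} and Fubini--Tonelli (legitimate because the integrand is nonnegative and $\varsigma$ is $\sigma$-finite), I would rewrite
\[
\int_K \Psi_M(\xi) \, d\nu_{\wh{G}}(\xi) = \int_{H_M^c} \int_{\dalpha_h K} |\wh{\psi}(\eta)|^2 \, d\nu_{\wh{G}}(\eta) \, d\varsigma(h) = \int_{\wh{G}} |\wh{\psi}(\eta)|^2 \, \varsigma(E(\eta)) \, d\nu_{\wh{G}}(\eta),
\]
where $E(\eta) := \{h \in H_M^c : \eta \in \dalpha_h K\}$. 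The task now reduces to a uniform-in-$\eta$ bound $\varsigma(E(\eta)) \leq \|u_{c_0}\|_{L^\infty([M,\infty))}$.

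For this, observe that $\eta \in \dalpha_h K$ means $\dalpha_h^{\,-1}(\eta) \in K$, so $r_1 \leq \dg(\dalpha_h^{\,-1}(\eta),\id) \leq r_2$. The inverse bi-Lipschitz inequalities (\ref{eq:metricinverse}) then force $L(h) \geq \dg(\eta,\id)/r_2$ and $\ell(h) \leq \dg(\eta,\id)/r_1$. The uniform expansiveness $L(h) \leq f(\ell(h))$ (valid since $h \in H_M^c$) combined with the monotonicity of $f$ yields $L(h) \leq f(c_0 \, \dg(\eta,\id)/r_2)$. Setting $t := \max\{M, \dg(\eta,\id)/r_2\} \geq M$, and absorbing the residual case $\dg(\eta,\id)/r_2 < M$ by monotonicity of $f$ once more, I conclude that $E(\eta) \subset \{h : t \leq L(h) \leq f(c_0 t)\}$, whose $\varsigma$-measure equals $u_{c_0}(t) \leq \|u_{c_0}\|_\infty$.

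Combining the two steps gives $\int_K \Psi_M \, d\nu_{\wh{G}} \leq \|u_{c_0}\|_\infty \|\psi\|_{L^2(G)}^2 < \infty$, proving (\ref{eq:localintegrabilityII}) with $\mathcal{O} = \{\id\}$ (the exclusion of $\id$ is forced because we needed $r_1 > 0$). The only delicate step is verifying that the bi-Lipschitz constraints on $L(h)$ and $\ell(h)$ translate, via uniform expansiveness, into precisely the interval $[t, f(c_0 t)]$ matching the hypothesis on $u_{c_0}$; once that alignment is in place, the reversal of integration and a standard application of Plancherel do the rest.
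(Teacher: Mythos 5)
Your argument is correct and follows essentially the same route as the paper: both test $\Psi_M$ on a set contained in an annulus $r_1 \le \dg(\cdot,\id)\le r_2$, use the bi-Lipschitz bounds together with $L(h)\le f(\ell(h))$ to trap $L(h)$ in an interval $[t, f(c_0 t)]$ with $c_0 = r_2/r_1$ and $t\ge M$, and then conclude by Fubini--Tonelli and the $L^\infty$ bound on $u_{c_0}$. The only differences are organizational --- you bound the fiber measure $\varsigma(E(\eta))$ uniformly in $\eta$ rather than splitting the $(L(h),\dg(\xi,\id))$-domain into the regions $A\sqcup A'$ and $B$ of Figure \ref{fig:fubini}, and you apply $f$ to $\ell(h)$ instead of $f^{-1}$ to $L(h)$, which sidesteps invertibility of $f$ --- so the two proofs coincide in substance.
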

\begin{proof}
We prove local integrability by showing that
\[
E_M(\epsilon,\xi_0) = \int_{H_M^c} \int_{\dalpha_hB(\xi_0,\epsilon)} |\wh{\psi}(\xi)|^2 d\nu_{\wh{G}}(\xi) d\varsigma(h) < \infty \quad \forall \xi_0 \neq \id .
\]
Let $\xi \in \dalpha_hB(\xi_0,\epsilon)$, i.e. $\dg(\dalpha_h^{\, -1}(\xi),\xi_0)<\epsilon$.
By (\ref{eq:metric}) and (\ref{eq:balls}) we have
\begin{align*}
\dg(\xi,\id) & \leq \dg(\dalpha_h(\xi_0),\id) + \dg(\xi,\dalpha_h(\xi_0)) \leq L(h) (\dg(\xi_0,\id) + \dg(\dalpha_h^{\, -1}(\xi),\xi_0))\\
& \leq L(h) (\dg(\xi_0,\id) + \epsilon)\\
\dg(\xi,\id) & \geq \ell(h) \dg(\dalpha_h^{\, -1}(\xi),\id) \geq \ell(h) (\dg(\xi_0,\id) - \dg(\dalpha_h^{\, -1}(\xi),\xi_0))\\
& \geq \ell(h)(\dg(\xi_0,\id) - \epsilon) \geq f^{-1}(L(h))(\dg(\xi_0,\id) - \epsilon).
\end{align*}
Denoting by $\mathfrak{C}(r,R) = B(\id,R)\smallsetminus B(\id,r)$, we can then write
\[
E_M(\epsilon,\xi_0) \leq \int_{H_M^c} \int_{\mathfrak{C}(r_h,R_h)} |\wh{\psi}(\xi)|^2 d\nu_{\wh{G}}(\xi) d\varsigma(h)
\]
where $r_h = f^{-1}(L(h))(d(\xi_0,\id) - \epsilon)$ and $R_h = L(h)(d(\xi_0,\id)+\epsilon)$.

The domain of integration can now be split as in Figure \ref{fig:fubini}, so that
%\begin{align*}
%& E_M(\epsilon,\xi_0) \leq \int_A |\wh{\psi}(\xi)|^2 d\nu_{\wh{G}}(\xi) d\varsigma(h) + \int_B |\wh{\psi}(\xi)|^2 d\nu_{\wh{G}}(\xi) d\varsigma(h)\\
%& \leq \int_{A \sqcup A'} |\wh{\psi}(\xi)|^2 d\nu_{\wh{G}}(\xi) d\varsigma(h) + \int_B |\wh{\psi}(\xi)|^2 d\nu_{\wh{G}}(\xi) d\varsigma(h)\\
%& = E_M^{(A \sqcup A')}(\epsilon,\xi_0) + E^{(B)}_M(\epsilon,\xi_0) .
%\end{align*}
\[
E_M(\epsilon,\xi_0) \leq \int_{A \sqcup A'} |\wh{\psi}(\xi)|^2 d\nu_{\wh{G}}(\xi) d\varsigma(h) + \int_B |\wh{\psi}(\xi)|^2 d\nu_{\wh{G}}(\xi) d\varsigma(h) = E^{(1)} + E^{(2)}.
\]
\begin{figure}[h!]
\centering
\begin{tikzpicture}[scale=2.7]
 \draw[->] (-0.2,0) -- (4.2,0) coordinate (x axis);
 \draw[->] (0,-0.2) -- (0,1.52) coordinate (y axis);
 
 \node at (4,-0.16) {$L(h)$};
 \node at (-0.35,1.4) {$\dg(\xi,\id)$};
 \node at (0.34,1.25) {$\textstyle H_M$};
 \node at (0.66,1.24) {$\textstyle H_M^c$};

 \draw[black, thick, domain=0:4.2] plot (\x, {0.3*(\x + 0.2*sin(3*\x r))});
 \draw[black, thick, domain=0:1.6] plot (\x, {0.8*\x});
 
 \draw[gray, dashed] (0.5,0) -- (0.5,1.48);
 \draw[gray, dashed] (0,0.4) -- (1.7,0.4);
 \draw[gray, dashed] (1.53,0) -- (1.53,0.5);
 \draw[gray, dashed] (0,0.21) -- (1.7,0.21);
 \draw[lightgray, line width = 2, line cap=round] (1.529,1.2) -- (4.028,1.2);

 \draw (0.5,1.5pt) -- (0.5,-1.5pt);
 \node at (0.5,-0.142) {$\scriptstyle M$};
 \draw (1.53,1.5pt) -- (1.53,-1.5pt);
 \node at (1.5,-0.16) {$\scriptstyle f(\frac{\Delta + \epsilon}{\Delta - \epsilon}M)$};
 \draw (0.05,0.4) -- (-0.05,0.4);
 \node at (-0.32,0.4) {$\scriptstyle (\Delta + \epsilon)M$};
 \draw (0.05,0.21) -- (-0.05,0.21);
 \node at (-0.45,0.21) {$\scriptstyle (\Delta - \epsilon)f^{-1}(M)$};
 
 \node at (2.8,1.35) {$\frac{\dg(\xi,\id)}{\Delta + \epsilon} \leq L(h) \leq f\big(\frac{\dg(\xi,\id)}{\Delta - \epsilon}\big)$};
 \node at (1.52,0.8) {$\scriptscriptstyle\dg(\xi,\id)=(\Delta + \epsilon) L(h)$};
 \node at (3.22,0.8) {$\scriptscriptstyle\dg(\xi,\id)=(\Delta - \epsilon) f^{-1}(L(h))$};

 \node at (0.59,0.322) {$A$};
 \node at (1.42,0.29) {$A'$};
 \node at (1.2,0.55) {$B$};

\end{tikzpicture}
\caption{Domain of integration for $E_M(\epsilon,\xi_0)$, with $\Delta = \dg(\xi_0,\id)$.}\label{fig:fubini}
\end{figure}
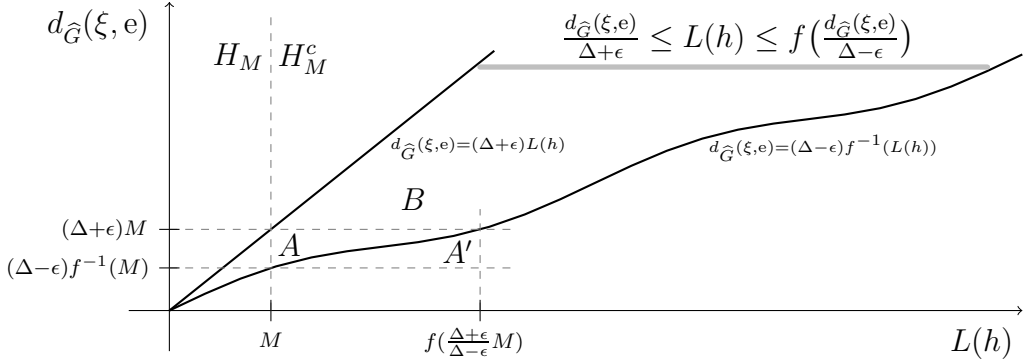

%\footnote{Actually, here we obtain finiteness for the exchanged integrations, which a posteriori allows the exchange as in the usual Fubini's style arguments.}
For the first term, using Fubini's theorem, we exchange the integrations in $\wh{G}$ and $H$ as follows: denoting by $F(t_1,t_2) = \varsigma\left(\left\{h \in H_M^c \, : \, t_1 \leq L(h) \leq t_2\right\}\right)$
%\begin{align*}
%E^{(A \sqcup A')}_M(\epsilon,\xi_0) & = F\left(M,f\bigg(\frac{\Delta+\epsilon}{\Delta-\epsilon}M\bigg)\right)\int_{\mathfrak{C}\big((\Delta - \epsilon)f^{-1}(M),(\Delta + \epsilon)M\big)} |\wh{\psi}(\xi)|^2 d\nu_{\wh{G}}(\xi)\\
%& = u_c(M) \int_{\mathfrak{C}\big((\Delta - \epsilon)f^{-1}(M),(\Delta + \epsilon)M\big)} |\wh{\psi}(\xi)|^2 d\nu_{\wh{G}}(\xi)
%\end{align*}
\begin{align*}
E^{(1)} & = F\left(M,f\bigg(\frac{\Delta+\epsilon}{\Delta-\epsilon}M\bigg)\right)\int_{\mathfrak{C}\big((\Delta - \epsilon)f^{-1}(M),(\Delta + \epsilon)M\big)} |\wh{\psi}(\xi)|^2 d\nu_{\wh{G}}(\xi)\\
& = u_c(M) \int_{\mathfrak{C}\big((\Delta - \epsilon)f^{-1}(M),(\Delta + \epsilon)M\big)} |\wh{\psi}(\xi)|^2 d\nu_{\wh{G}}(\xi)
\end{align*}
where we have used the shorthand notation $\Delta = \dg(\xi_0,\id)$, and $c = \displaystyle \frac{\Delta+\epsilon}{\Delta-\epsilon}$.

For the second term we can proceed analogously, obtaining
\begin{align*}
E^{(2)} & = \int_{\wh{G} \smallsetminus B(\id,(\Delta+\epsilon)M)} |\wh{\psi}(\xi)|^2 F\left(\frac{\dg(\xi,\id)}{\Delta + \epsilon},f\bigg(\frac{\dg(\xi,\id)}{\Delta - \epsilon}\bigg)\right) d\nu_{\wh{G}}(\xi)\\
& = \int_{\wh{G} \smallsetminus B(\id,(\Delta+\epsilon)M)} |\wh{\psi}(\xi)|^2 \, u_c\left(\frac{\dg(\xi,\id)}{\Delta + \epsilon}\right) d\nu_{\wh{G}}(\xi)
\end{align*}
again with $c = \displaystyle \frac{\Delta+\epsilon}{\Delta-\epsilon}$.
Assuming (\ref{eq:sufficient}), we then have
\[
E_M(\epsilon,\xi_0) \leq \Big(\ \textnormal{ess}\!\!\!\!\!\!\!\!\sup_{[M,+\infty)\ \quad} \!\!\!\!\!\!\!u_c \Big)\int_{\wh{G} \smallsetminus B(\id,M(\Delta-\epsilon))} |\wh{\psi}(\xi)|^2 d\nu_{\wh{G}}(\xi)
\]
which is finite.
\end{proof}

\subsection{Examples}\label{sec:examples}

In order to have a concrete picture of the presented results we briefly discuss here some relatively simple examples of wavelet-type frames.

\subsubsection{Semi-continuous wavelets on $L^2(\R)$}\ \\
Let $G = \R$, consider as discrete subgroup $\Gamma = \Z$, and as a set of automorphisms the full dilations group $H = \R^+$, with $\dalpha_a\xi = a\xi$ and a measure that is absolutely continuous with respect to the Lebesgue one, i.e. $d\varsigma(a) = m(a)da$ for some positive $m \in L^1(\R^+)$.
% Since the Euclidean distance is homogeneous with respect to dilations, the bi-Lipschitz constants coincide: 
In this case, $\ell(a) = L(a) = a$, and $\dalpha$ is uniformly expanding with $f(x) = x$.
Condition (\ref{eq:localintegrability}) reads
\[
\Psi_M(\xi) = \int_M^{+\infty} |\sqrt{a} \wh{\psi}(a\xi)|^2 \, m(a)da = \frac{1}{\xi^2} \int_{M\xi}^{+\infty} |\wh{\psi}(\eta)|^2 \, \eta \, m\big(\frac{\eta}{\xi}\big)d\eta
\]
which is convergent for all $\psi \in L^2(G)$ and $\xi \neq 0$ only if $m(a) \stackrel{a \to \infty}{\lesssim} O(\frac{1}{a})$.
This is the same result one gets from condition (\ref{eq:sufficient}), since
\[
u_c(t) = \int_t^{ct} m(a)da .
\]
In this case, the Calder\'on's integral reads
\[
\mathcal{C}_\psi(\xi) = \int_M^{+\infty} |\wh{\psi}(a\xi)|^2 \, m(a)da.
\]

\subsubsection{Discrete wavelets on $L^2(\R^n)$ with anisotropic dilations}\ \\
Let $G = \R^n$, and let $\Gamma = \Z^n$. As set of automorphisms let us take $H = \Z$, with $\dalpha_j\xi = A^j\xi$ for a matrix $A \in GL_n(\R)$ (see also Appendix \ref{sec:more} for the case of so-called expanding on a subspace matrices).

By \cite[Lemma 5.1]{HLW}, there exist $0 < \ul{\lambda} < \ol{\lambda}$ and $C > 0$ such that the bi-Lipschitz constants are
\begin{align*}
\ell(j) = \frac{1}{C}\ul{\lambda}^j, L(j) = C \ol{\lambda}^j & \ , \quad j \geq 0\\
\ell(j) = \frac{1}{C}\ol{\lambda}^j, L(j) = C \ul{\lambda}^j & \ , \quad j \leq 0 .
\end{align*}
So if either $\ul{\lambda} > 1$ or $\ol{\lambda} < 1$, then $\dalpha$ is uniformly expanding, with
\[
f(x) = \left\{
\begin{array}{cc}
C (Cx)^{\log_{\ul{\lambda}} \ol{\lambda}} & x > 1\\
C (Cx)^{\log_{\ol{\lambda}} \ul{\lambda}} & x \leq 1
\end{array}
\right..
\]

Let $\ul{\lambda} > 1$, and set $M > 1$. The quantity $u_c$ in (\ref{eq:sufficient}) reads (we let the constant $c$ change freely)
\begin{align*}
u_c(t) & = \varsigma(\{j \in \Z \, : \, t \leq C\ol{\lambda}^j \leq C (C c t)^{\log_{\ul{\lambda}}\ol{\lambda}}\})\\
& = \varsigma(\{j \in \Z \, : \, \log_{\ol{\lambda}} \frac{t}{C} \leq j \leq (\log_{\ul{\lambda}}\ol{\lambda})(\log_{\ol{\lambda}} Cct)\})
% \\
% & = \varsigma(\{j \in \Z \, : \, \log_{\ol{\lambda}} t - \log_{\ol{\lambda}} C \leq j \leq (\log_{\ul{\lambda}}\ol{\lambda})(\log_{\ol{\lambda}} Cc + \log_{\ol{\lambda}} t)\})
\end{align*}
that is the measure of a finite set. Thus, if we choose $\varsigma = m(j) \cdot$ counting, condition (\ref{eq:sufficient}) reads simply
$\limsup_{j \to \infty} m(j) < \infty$

For comparison, let us try to check directly condition (\ref{eq:localintegrability}).
The dilation operator for $\psi \in L^2(\R^n)$ reads
\[
\wh{D}(j)\wh{\psi}(\xi) = |\det A|^\frac{j}{2}\psi(A^j\xi)
\]
with $\delta(j) = |\det A|^j$ so, for $N(M) = \lceil \log_{\ol{\lambda}} \frac{M}{C}\rceil$, we have
\[
\Psi_M(\xi) = \sum_{j = N(M)}^{+\infty} |\det A|^j|\wh{\psi}(A^j\xi)|^2 \, m(j)
\]
and, for any $K \subset \R^n$ compact,
%\begin{align*}
%\int_{K} \Psi_M(\xi) d\xi & = \int_{K} \sum_{j = N(M)}^{+\infty} |\det A|^j|\wh{\psi}(A^j\xi)|^2 \, m(j) d\xi\\
%& = \sum_{j = N(M)}^{+\infty} m(j) \int_{A^jK} |\wh{\psi}(\xi)|^2 d\xi .
%\end{align*}
\[
\int_{K} \Psi_M(\xi) d\xi = \int_{K} \sum_{j = N(M)}^{+\infty} \delta(j)|\wh{\psi}(A^j\xi)|^2 \, m(j) d\xi = \!\!\sum_{j = N(M)}^{+\infty} m(j) \int_{A^jK} |\wh{\psi}(\xi)|^2 d\xi .
\]
Let $d = \min_{\xi \in K} |\xi|$ and $D = \max_{\xi \in K}|\xi|$, so that $K \subset B(0,D) \smallsetminus B(0,d)$ and $A^j K \subset A^jB(0,D) \smallsetminus A^jB(0,d)$. The $A^j$ image of a ball $B(0,r)$ is contained in a (hyper)ellipsoid with small semi-minor axis $\frac{1}{C}{\ul{\lambda}}^j r$ and semi-major axis $C{\ol{\lambda}}^j r$. Thus a sufficient condition for $K \cap A^{j} K = \emptyset$ is $C \ol{\lambda}^jd > D$. If $d > 0$, that is $0 \notin K$, then there are at most $\log_{\ol{\lambda}}(\frac{D}{dC})$ terms in the sum that correspond to integrations over domains that superpose. Let then $S = \lceil \log_{\ol{\lambda}} (\frac{D}{Cd})\rceil$, so that
\[
\int_{K} \Psi_M(\xi) d\xi = \sum_{l = 1}^{S}\sum_{j = N(M)}^{+\infty} m(l+jS+N(M)) \int_{A^{l+jS+N(M)}K} |\wh{\psi}(\xi)|^2 d\xi .
\]
If $m$ is a constant, since the inner sum runs over disjoint domains, we have $\int_{K} \Psi_M(\xi) d\xi \leq m\, S\, \|\psi\|_{L^2(\R^n)}^2$, while in general we have boundedness if $\limsup_{j \to \infty} m(j) < \infty$.

We would like to observe explicitly that this last argument actually mimics the proof of Theorem \ref{theo:sufficient}, and can be found to be very close to the one used in \cite[Lemma 4.1]{ChristensenHasannasabLemvig}.

\subsubsection{Wavelets with composite dilations}\ \\
For $G = \R^n$ and $\Gamma$ a cocompact subgroup, the present setting allows us to consider $\{\alpha_h\}_{h \in H}$ to be any subset of $GL_n(\R)$ parametrized by $H$, with an appropriate measure $\varsigma$ which behaves well with respect to the constants (\ref{eq:metric}). This includes the case of so-called composite dilations \cite{GLLWW}, with no restriction on the discreteness of $H$.

A notable relevant case is provided by shearlets \cite{LabateShear}, \cite{KutyniokLabateShearlets}: let $G = \R^2$, let $\Gamma = \Z^2$, and let $H$ be any subset of $\R \times \R^+$. For $a \in \R^+$ and $s \in \R$, the shearlets automorphisms read $\alpha_{a,s}(x) = A_a^{-1}S_s^{-1}(x)$, with anisotropic dilations $A_a = \binom{a \ \ 0}{0 \, \sqrt{a}}$ and shears $S_s = \binom{1 \ s}{0 \ 1}$. 
Thus $\delta(a,s) = a^\frac32$, and
\[
\dalpha_{a,s}(\xi) = A_a\,{}^tS_s (\xi) = \left(
\begin{array}{cc}
a & 0\\
s\sqrt{a} & \sqrt{a}
\end{array}
\right)
\binom{\xi_1}{\xi_2} =
\binom{a \xi_1}{s\sqrt{a}\xi_1 + \sqrt{a}\xi_2} .
\]
The optimal bi-Lipschitz constants with respect to the Euclidean metric are given by the singular values of the matrix $A_a\,{}^tS_s$, i.e. by the square root of the eigenvalues of
\[
M(a,s) = \left(
\begin{array}{cc}
a & s\sqrt{a}\\
0 & \sqrt{a}
\end{array}
\right)
\left(
\begin{array}{cc}
a & 0\\
s\sqrt{a} & \sqrt{a}
\end{array}
\right)
= \left(
\begin{array}{cc}
a^2 + s^2 a & sa\\
sa & a
\end{array}
\right)
\]
so that
\begin{align*}
L(a,s) & = a^\frac12 \Big(a + s^2 + 1 + \sqrt{(a + s^2 + 1)^2 - 4 a}\Big)^\frac12\\
\ell(a,s) & = a^\frac12 \Big(a + s^2 + 1 - \sqrt{(a + s^2 + 1)^2 - 4 a}\Big)^\frac12 .
\end{align*}
Since, for any fixed $a$, as $s$ becomes large we can have arbitrarily large values of $L$ with arbitrarily small values of $\ell$, this family of automorphisms does not satisfy the expanding condition. Nevertheless, it is easy to see that it satisfies Property X. Indeed, if for simplicity we consider the invariant metric provided by the $\infty$ distance, whose balls are squares $B_\infty(0,\epsilon) = \{\xi \in \R^2 \, : \, \max\{|\xi_1|,|\xi_2|\} < \epsilon\}$, we can easily check (see e.g. Figure \ref{fig:shearlets}) that
\[
\sharp \left(\Z^2 \cap \dalpha_{a,s} B_\infty(0,\epsilon)\right) \leq (\lfloor 2\epsilon a\rfloor + 1)(\lfloor 2\epsilon \sqrt{a}\rfloor + 1)
\]
so, if we consider $\epsilon_0 = \frac12$, for all $\epsilon < \epsilon_0$ either we have $a \geq 1$ or we have only one point inside the ball. If $a \geq 1$ we then have
\[
\sharp \left(\Z^2 \cap \dalpha_{a,s} B_\infty(0,\epsilon)\right) \leq 1 + 2\epsilon \left(2\epsilon + \frac{1 + \sqrt{a}}{a}\right) a^{\frac32}
\]
where $\frac{1 + \sqrt{a}}{a} \leq 2$, so that we can choose $C_\epsilon = 4\epsilon(\epsilon+1)$.
\begin{figure}[h!]
\centering
\begin{tikzpicture}[scale=2.5]
 \draw[->] (-1,0) -- (1,0) coordinate (x axis);
 \draw[->] (0,-1) -- (0,1) coordinate (y axis);

 \draw[->] (1.8,0) -- (4.2,0) coordinate (x axis);
 \draw[->] (3,-1) -- (3,1) coordinate (y axis);

 \draw [thick] (-0.5,-0.5) rectangle (0.5,0.5);
 \node at (0.6,-0.1) {$\epsilon$};
 \node at (-0.1,0.6) {$\epsilon$};

 \draw [thick] (2,-0.9) -- (4,0.3) -- (4,0.9) -- (2,-0.3) -- (2,-0.9);
 \draw (4,0.05) -- (4,-0.05);
 \node at (4,-0.12) {$\epsilon a$};
 \draw (2.95,0.303) -- (3.05,0.303);
 \node at (2.72,0.32) {$\epsilon \sqrt{a}$};

 \foreach \x in {-3,...,3} \foreach \y in {-3,...,3}
 {
  \path[fill=red] (\x*0.26 + 3,\y*0.26) circle (0.02);
 }
\end{tikzpicture}
\caption{Left: the $\infty$ ball $B_\infty(0,\epsilon) = \{\xi \in \R^2 \, : \, \max\{|\xi_1|,|\xi_2|\} < \epsilon\}$. Right: a set $\dalpha_{a,s} B_\infty(0,\epsilon)$ and the integer lattice.}\label{fig:shearlets}
\end{figure}
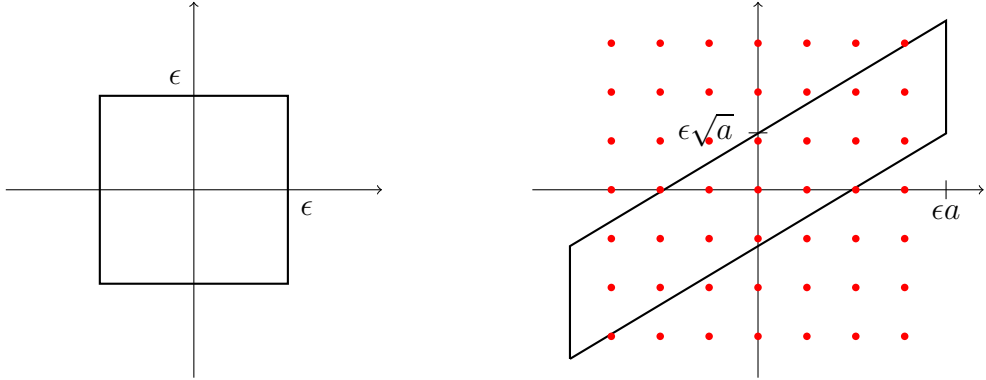

\section{Gabor-type frames}\label{sec:Gabor}

In this section we will show how the results of \S \ref{sec:boundedness} allow us to address the case of Gabor frames, even if modulations can not be directly realized in terms of spatial automorphisms. Nevertheless, by considering an appropriate Hilbert subspace and the set of automorphisms that define the Heisenberg group as a semidirect product, one can recover the discussed setting. We will then show that, even if these automorphisms are not expanding, they possess Property X as a simple consequence of Lemma \ref{lem:counting}.

\subsection{LCA-Gabor systems and automorphisms}

For simplicity, let us identify $\T = \{z \in \C \, : \, |z| = 1\}$ and use exponential coordinates on it. For any LCA group $\RR$, let $G = \RR \times \T$, with translations given by
\[
T(q,e^{2\pi i\varphi})f(x,e^{2\pi i\theta}) = f(x - q, e^{2\pi i(\theta - \varphi)}) \ , \quad f \in L^2(G)
\]
where we use the multiplicative notation for the composition on $\T$.
Let then $\PP \subset \wh{\RR}$, and let $\alpha : \PP \to \textnormal{Aut}(G)$ be given by
\[
\alpha_p(x,e^{2\pi i\theta}) = (x, e^{2\pi i(\theta + \langle p, x\rangle)})
\]
where we denote by $e^{2\pi i \langle p, x\rangle} \in \T$ the action of the characters of $\RR$. Note that this action is the one that defines the (reduced) Heisenberg group (see e.g. \cite[\S 4.1]{Thangavelu} or \cite[\S 1.3, \S 1.11]{Folland}) as the semidirect product $\wh{\RR} \ltimes \RR \times \T$. Note that, for this action, we have $\delta = 1$. Let also $\varsigma$ be a $\sigma$-finite Borel measure on $\PP$ such that this $\alpha_p$ is measurable.

\begin{definition}
For $\kappa \in \Z \smallsetminus \{0\}$, let $\pi_\kappa : \wh{\RR}\times\RR \to \mathcal{U}(L^2(\RR))$ be the projective representation
\[
\pi_\kappa(p,q)g(x) = e^{2\pi i \kappa\langle p, x\rangle} g(x - q)
\]
and let $\Lambda$ be a cocompact closed subgroup of $\RR$. We say that the Gabor system
\[
\mathbb{G}_\kappa(g,\Lambda,\PP) = \{\pi_\kappa(p,q)g \, : \, q \in \Lambda \, , \ p \in \PP\}
\]
is a $\varsigma$-frame of $L^2(\RR)$ with constants $0 < A \leq B < \infty$ if
\[
A \|u\|_{L^2(\RR)}^2 \leq \int_\PP \Big(\int_\Lambda |\langle u, \pi_\kappa(p,q)g\rangle_{L^2(\RR)}|^2  d\mu_\Lambda(q) \Big) d\varsigma(p) \leq B \|u\|_{L^2(\RR)}^2
\]
for all $u \in L^2(\RR)$.
\end{definition}

These systems were thoroughly studied in the recent work \cite{JakobsenLemvigGabor}.

\begin{definition}
For $\kappa \in \Z$, let $\Hil_\kappa$ be the Hilbert subspace of $L^2(G)$ given by
\[
\Hil_\kappa = \{f \in L^2(\RR\times\T) \, : \, f(x,e^{2\pi i\theta}) = u(x)e^{2\pi i \kappa\theta} \, , \ u \in L^2(\RR)\}
\]
and, for $f \in \Hil_\kappa$, let us call $u$ its $\RR$-representative.
\end{definition}

With this notation, we can can provide the main observation that allows us to relate Gabor frames to the affine frames of definition \ref{def:frames}.

\begin{lemma}\label{lem:gabortrick}
Let $\Lambda$ be a cocompact closed subgroup of $\RR$, and let $\Gamma = \Lambda \times \T$. 
Let $\psi \in \Hil_\kappa$, and let $g \in L^2(\RR)$ be its $\RR$-representative. Then the system $\mathcal{A}_{\Gamma,\PP}(\psi)$ is a $\varsigma$-frame of $\Hil_\kappa$ with constants $0 < A \leq B < \infty$ if and only if the Gabor system $\mathbb{G}_\kappa(g,\Lambda,\PP)$ is a $\varsigma$-frame of $L^2(\RR)$ with the same constants.
\end{lemma}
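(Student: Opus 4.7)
The plan is to show that the frame inequality (\ref{eq:frame}) for $\mathcal{A}_{\Gamma,\PP}(\psi)$ on $\Hil_\kappa$ coincides termwise, after identifying $f\in\Hil_\kappa$ with its $\RR$-representative $u\in L^2(\RR)$, with the frame inequality for $\mathbb{G}_\kappa(g,\Lambda,\PP)$ on $L^2(\RR)$. Since the identification $f\leftrightarrow u$ is an isometric bijection between $\Hil_\kappa$ and $L^2(\RR)$ (up to the fixed normalization $\mu_\T(\T)=1$), proving this termwise equality is enough.

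First I would compute how $D(p)T(\gamma)$ acts on $\psi\in\Hil_\kappa$. Since $\delta\equiv 1$, one checks directly from the definitions of $\alpha_p$ and $T$ that for $\gamma=(q,e^{2\pi i\varphi})\in\Lambda\times\T$ the function $D(p)T(q,e^{2\pi i\varphi})\psi$ again lies in $\Hil_\kappa$, with $\RR$-representative
\[
\bigl[D(p)T(q,e^{2\pi i\varphi})\psi\bigr]_\RR(x) \;=\; e^{-2\pi i\kappa\varphi}\,e^{2\pi i\kappa\langle p,x\rangle}g(x-q) \;=\; e^{-2\pi i\kappa\varphi}\,\pi_\kappa(p,q)g(x).
\]
Next, for $f\in\Hil_\kappa$ with representative $u$, the $L^2(G)=L^2(\RR\times\T)$ inner product reduces (by integrating out the $\T$-variable and using $\mu_\T(\T)=1$) to
\[
\langle f,D(p)T(q,e^{2\pi i\varphi})\psi\rangle_{L^2(G)}
= e^{2\pi i\kappa\varphi}\,\langle u,\pi_\kappa(p,q)g\rangle_{L^2(\RR)},
\]
so in particular the modulus squared is independent of $\varphi$.

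The remaining step is to integrate over $\gamma\in\Gamma$. The normalization of Haar measures fixed in \S\ref{sec:preliminaries} forces $\mu_\Gamma=\mu_\Lambda\otimes\mu_\T$ (this is what makes Weil's formula on $G=\RR\times\T$ with $\Gamma=\Lambda\times\T$ reduce to Weil's formula on $\RR$ with $\Lambda$, once we pick $\kappa=\mu_{\RR/\Lambda}$ normalized). Using Fubini and $\mu_\T(\T)=1$ to kill the trivial $\varphi$-integration, we obtain
\[
\int_\Gamma|\langle f,D(p)T(\gamma)\psi\rangle_{L^2(G)}|^2\,d\mu_\Gamma(\gamma)
=\int_\Lambda|\langle u,\pi_\kappa(p,q)g\rangle_{L^2(\RR)}|^2\,d\mu_\Lambda(q),
\]
and likewise $\|f\|_{L^2(G)}^2=\|u\|_{L^2(\RR)}^2$. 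Integrating further against $d\varsigma(p)$, the inequality
\[
A\|f\|_{L^2(G)}^2\le\int_\PP\!\!\int_\Gamma|\langle f,D(p)T(\gamma)\psi\rangle|^2\,d\mu_\Gamma(\gamma)\,d\varsigma(p)\le B\|f\|_{L^2(G)}^2\qquad\forall f\in\Hil_\kappa
\]
is seen to be identical, after the $f\leftrightarrow u$ identification, to the Gabor frame inequality with the same constants $A,B$, which proves both implications at once.

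The only minor pitfall I anticipate is bookkeeping with the measures: verifying that the normalization convention of \S\ref{sec:preliminaries} really does yield $\mu_\Gamma=\mu_\Lambda\otimes\mu_\T$ with $\mu_\T(\T)=1$, so that all phase and normalization factors cancel cleanly and the frame constants are preserved. Apart from that, the argument is a straightforward unpacking of definitions.
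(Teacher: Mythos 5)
Your proposal is correct and follows essentially the same route as the paper's proof: compute $\langle f, D(p)T(q,e^{2\pi i\varphi})\psi\rangle_{L^2(G)} = e^{2\pi i\kappa\varphi}\langle u,\pi_\kappa(p,q)g\rangle_{L^2(\RR)}$, observe the phase is killed by the modulus and the $\T$-integration is trivial, and use $\|f\|_{L^2(G)}=\|u\|_{L^2(\RR)}$. Your extra care about the normalization $\mu_\Gamma=\mu_\Lambda\otimes\mu_\T$ with $\mu_\T(\T)=1$ is a point the paper leaves implicit, but it is the same argument.
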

\begin{proof}
Let $f, \psi \in \Hil_\kappa$, and let $u, g \in L^2(\RR)$ be their $\RR$-representatives. Then
\begin{align*}
\langle f, D(p)T(q,e^{2\pi i \varphi})\psi\rangle_{L^2(G)} & = \int_{\T \times \RR} \!\!u(x)e^{2\pi i \kappa\theta} \ol{g(x - q)} e^{-2\pi i \kappa (\theta - \varphi + \langle p,x\rangle)} d\mu_\RR(x) d\theta\\
& = e^{2\pi i \kappa\varphi}\langle u, \pi_\kappa(p,q)g\rangle_{L^2(\RR)} .
\end{align*}
The proof follows by noting that $\|f\|_{L^2(G)} = \|u\|_{L^2(\RR)}$.
\end{proof}

\subsection{Property X and Calder\'on's bounds}
The dual action $\dalpha$ on $\wh{G} = \wh{\RR} \times \Z$ can be computed from the duality
\[
\langle (\xi,k), \alpha_p^{-1}(x,\theta)\rangle = e^{2\pi i (\langle \xi,x\rangle + k(\theta - \langle p,x\rangle))} \, ,
\]
where $(\xi,k) \in \wh{\RR} \times \Z$ and $(x,\theta) \in \RR \times \T$, so that
\begin{equation}\label{eq:Gaborautomorphisms}
\dalpha_p (\xi,k) = (\xi - kp , k)
\end{equation}
where by $k p$ we mean the iterated $\wh{\RR}$ composition $\underbrace{p + p + \dots + p}_k$ if $k > 0$. If $k < 0$, compose $|k|$ times $-p$, while if $k = 0$ that is the neutral element of $\wh{R}$.

Given an invariant distance $\dr$ on $\wh{\RR}$ satisfying (\ref{eq:Vitali}), the distance
\[
\dg((\xi,k),(\eta,l)) = \dr(\xi,\eta) + |k - l|
\]
is invariant on $\wh{G}$ and satisfies (\ref{eq:Vitali}). For $\xi \in \wh\RR$, let us also denote by \[\|\xi\| = \dr(\xi,\id_{\wh{\RR}}).\]

\begin{lemma}
The map $\dalpha_p$ is bi-Lipschitz with optimal constants
\[
L(p) = 1 + \|p\| \ , \quad \ell(p) = \frac{1}{1 + \|p\|} = \frac{1}{L(p)} .
\]
\end{lemma}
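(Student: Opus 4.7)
First I would unfold the two distances explicitly. By (\ref{eq:Gaborautomorphisms}) and the definition of $\dg$, together with the fact that the identity of $\wh{G}=\wh{\RR}\times\Z$ is $(\id_{\wh{\RR}},0)$,
\[
\dg(\dalpha_p(\xi,k),\id) = \|\xi - kp\| + |k|, \qquad \dg((\xi,k),\id) = \|\xi\| + |k|.
\]
The upper Lipschitz bound then reduces to a scalar inequality: iterating the triangle inequality for $\dr$ and using its group invariance gives $\|kp\| \le |k|\,\|p\|$, hence $\|\xi - kp\| \le \|\xi\| + |k|\,\|p\|$. Adding $|k|$ to both sides and factoring,
\[
\|\xi - kp\| + |k| \;\le\; \|\xi\| + (1+\|p\|)\,|k| \;\le\; (1+\|p\|)(\|\xi\|+|k|),
\]
which is exactly the claim $\dg(\dalpha_p(\xi,k),\id) \le L(p)\,\dg((\xi,k),\id)$ with $L(p) = 1+\|p\|$. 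To verify that this constant cannot be improved I would test at $(\xi,k) = (\id_{\wh{\RR}},1)$: then $\dalpha_p(\id_{\wh{\RR}},1)=(-p,1)$, so the numerator equals $\|p\|+1$ and the denominator equals $1$, and the ratio attains $1+\|p\|$ exactly.

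For the lower bound I would avoid redoing any triangle-inequality work by exploiting invertibility. It is immediate from the formula that $\dalpha_p^{-1} = \dalpha_{-p}$, and $\|-p\| = \|p\|$ by invariance of $\dr$. Applying the already proved upper bound with parameter $-p$ at the point $\dalpha_p(\xi,k)$,
\[
\dg((\xi,k),\id) \;=\; \dg\!\left(\dalpha_{-p}(\dalpha_p(\xi,k)),\id\right) \;\le\; (1+\|p\|)\,\dg(\dalpha_p(\xi,k),\id),
\]
which rearranges to $\dg(\dalpha_p(\xi,k),\id) \ge \frac{1}{1+\|p\|}\,\dg((\xi,k),\id)$, i.e.\ the claimed bound with $\ell(p)=1/L(p)$. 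Sharpness follows by testing at $(\xi,k) = (p,1)$: since $\dalpha_p(p,1) = (\id_{\wh{\RR}},1)$, the numerator is $1$ and the denominator is $1+\|p\|$, so the ratio attains exactly $1/(1+\|p\|)$.

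I do not anticipate any real technical obstacle, as every step is a one-line computation. The only conceptual point worth flagging is that the identity $\ell(p) = 1/L(p)$ obtained here is \emph{not} an instance of the suboptimal convention cautioned against right after the bi-Lipschitz definition: in this specific family the two extremal configurations $(\id_{\wh{\RR}},1)$ and $(p,1)$ are interchanged by the action of $\dalpha_p$, so the sharp upper and lower constants turn out to be genuine reciprocals. This is a feature of the Heisenberg-type automorphisms (\ref{eq:Gaborautomorphisms}), not a choice of normalization, and is consistent with $\delta \equiv 1$ for this action.
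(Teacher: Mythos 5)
Your proof is correct, and the upper bound is obtained exactly as in the paper: both arguments reduce to the scalar estimate $\|\xi-kp\|\leq\|\xi\|+|k|\,\|p\|$ and then factor out $1+\|p\|$, with sharpness tested at $k=\pm 1$, $\xi=\id_{\wh{\RR}}$. Where you diverge is the lower bound. The paper re-runs the estimate directly, writing $\ell(p)=\inf_{(\xi,k)\neq 0}\frac{\dr(\xi,kp)+|k|}{\|\xi\|+|k|}$ and using the reverse triangle inequality $\|\xi\|\leq\|\xi-kp\|+|k|\,\|p\|$ to bound the ratio below by $\bigl(1+\|p\|\bigr)^{-1}$, with the infimum attained at $\xi=kp$ (e.g.\ $k=1$). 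You instead observe that $\dalpha_p^{-1}=\dalpha_{-p}$ and $\|-p\|=\|p\|$, and apply the already-proved upper bound at the image point; this gives the lower bound for free and makes the identity $\ell(p)=1/L(p)$ a structural consequence of the family being closed under inversion with $L(-p)=L(p)$, rather than a coincidence of two separate optimizations. Your route is marginally slicker and explains \emph{why} the sharp constants are reciprocal (the two extremal configurations are swapped by $\dalpha_p$), which dovetails with Remark \ref{rem:nonexpandingGabor}; the paper's symmetric computation is equally short and keeps both constants visible as explicit sup/inf values. One tiny point in your favour: by exhibiting the test point $(\id_{\wh{\RR}},1)$ explicitly you sidestep the fact that for general $k$ equality in $\|kp\|\leq|k|\,\|p\|$ need not hold for an arbitrary invariant metric, whereas the paper's phrase ``attained at $\xi=\id_{\wh{\RR}}$'' is only guaranteed for $k=\pm1$.
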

\begin{proof}
By definition, we have $L(p) = \displaystyle\sup_{(\xi,k) \neq 0} \frac{\dr(\xi,kp) + |k|}{\|\xi\| + |k|}$, and
\[
\frac{\dr(\xi,kp) + |k|}{\|\xi\| + |k|} \leq \frac{\|\xi\| + |k|\,\|p\| + |k|}{\|\xi\| + |k|} = 1 + \frac{\|p\|}{1 + \|\xi\|/|k|} \leq 1 + \|p\| \, ,
\]
and the $\sup$ is a $\max$ attained at $\xi = \id_{\wh{\RR}}$.

On the other hand, for $\ell(p) = \displaystyle\inf_{(\xi,k) \neq 0} \frac{\dr(\xi,kp) + |k|}{\|\xi\| + |k|}$ we have
\begin{displaymath}
\frac{\dr(\xi,kp) + |k|}{\|\xi\| + |k|} \geq \frac{\|\xi - kp\| + |k|}{\|\xi - kp\| + |k|\,\|p\| + |k|} = \frac{1}{1 + \frac{|k|}{\|\xi - kp\| + |k|}\,\|p\|} \geq \frac{1}{1 + \|p\|} \, ,
\end{displaymath}
and the $\inf$ is a $\min$ attained at any $\xi = kp$.
\end{proof}

Thus, in order to comply with assumption {\bf III)} of \S\ref{sec:boundedness} and make use of the results of that section, we need to require that the function $\|\cdot\|$ be $\varsigma$-measurable on $\PP$.

\begin{remark}\label{rem:nonexpandingGabor}
Since in this case $\ell = \frac{1}{L}$ is optimal, then these automorphisms are not expanding. Indeed
\[
\ell(p) < M \iff L(p) > \displaystyle\frac{1}{M} .
\]
\end{remark}

Although these automorphisms do not satisfy the expanding property, we shall prove that they satisfy Property X, and consequently the results of \S\ref{sec:boundedness} can be applied. This is proved in the following two lemmata.

\begin{lemma}\label{lem:GaborL}
For all $\epsilon < 1$, we have
\begin{equation}\label{eq:characteristic}
\one{B((\xi_0,k_0),\epsilon)}(\xi,k) = \one{B_{\wh{\RR}}(\xi_0,\epsilon)}(\xi) \delta_{k,k_0} .
\end{equation}
In particular, we have
\[
\wh{\mathcal{G}}^{\,\Hil_\kappa}_{1} = \{(\xi,k)\in \wh{G} \, : \, \xi \in \wh{\RR}\, , \ k = \kappa\}
\]
which is isomorphic to $\wh{\RR}$ for any $\kappa \in \Z$.
\end{lemma}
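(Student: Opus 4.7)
The plan is straightforward since both claims reduce to unpacking definitions, once one computes the Fourier transform of the space $\Hil_\kappa$.

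First I would establish (\ref{eq:characteristic}) directly from the metric. Recall that $\dg((\xi,k),(\xi_0,k_0)) = \dr(\xi,\xi_0) + |k-k_0|$, and note that $|k-k_0|$ is a nonnegative integer. Thus the condition $\dg((\xi,k),(\xi_0,k_0)) < \epsilon < 1$ forces $|k-k_0| = 0$, i.e. $k = k_0$, and then reduces to $\dr(\xi,\xi_0) < \epsilon$. This immediately yields the pointwise factorization.

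Next, I would compute $\wh{\Hil_\kappa}$ explicitly. For $f \in \Hil_\kappa$ with $\RR$-representative $u \in L^2(\RR)$, using the duality between $\T$ and $\Z$ together with Fubini, one has
\[
\wh{f}(\xi,k) = \int_{\RR}\int_{\T} u(x) e^{2\pi i \kappa\theta} e^{-2\pi i(\langle\xi,x\rangle + k\theta)}\, d\theta\, d\mu_\RR(x) = \wh{u}(\xi)\,\delta_{k,\kappa},
\]
so that $\wh{\Hil_\kappa} = \{F \in L^2(\wh{G}) \, : \, F(\xi,k) = v(\xi)\,\delta_{k,\kappa}\,, \ v \in L^2(\wh{\RR})\}$, which up to the Kronecker factor is a copy of $L^2(\wh{\RR})$.

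Finally, combining the two observations, for $(\xi_0,k_0) \in \wh{G}$ and $\epsilon < 1$ one has by (\ref{eq:characteristic}) that $\one{B((\xi_0,k_0),\epsilon)}(\xi,k) = \one{B_{\wh{\RR}}(\xi_0,\epsilon)}(\xi)\,\delta_{k,k_0}$. This function lies in $\wh{\Hil_\kappa}$ if and only if $\delta_{k,k_0} = \delta_{k,\kappa}$ for all relevant $k$, i.e. if and only if $k_0 = \kappa$; and in that case the $\wh{\RR}$-component is $\one{B_{\wh{\RR}}(\xi_0,\epsilon)} \in L^2(\wh{\RR})$ for free since balls in $\wh{\RR}$ have finite measure (closed balls are compact). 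Hence $\wh{\mathcal{G}}^{\,\Hil_\kappa}_1 = \wh{\RR} \times \{\kappa\}$, which is isomorphic to $\wh{\RR}$. The only subtle point is noting that we only need the condition to hold for \emph{all} $\epsilon < 1$, which is automatic here since the structural condition $k_0 = \kappa$ does not depend on $\epsilon$; there is no real obstacle, as everything is forced by the discreteness of the $\Z$ factor relative to the subunit radius.
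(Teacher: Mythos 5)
Your proof is correct and follows essentially the same route as the paper: reduce the $\wh{G}$-ball of subunit radius to a product of an $\wh{\RR}$-ball with the singleton $\{k_0\}$ via the integrality of $|k-k_0|$, then observe that the resulting characteristic function lies in $\wh{\Hil_\kappa}$ exactly when $k_0=\kappa$. The only difference is that you make explicit the computation $\wh{f}(\xi,k)=\wh{u}(\xi)\,\delta_{k,\kappa}$ identifying $\wh{\Hil_\kappa}$, a detail the paper leaves implicit.
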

\begin{proof}
A $\wh{G}$-metric ball of radius $\epsilon$ and center $(\xi_0,k_0) \in \wh{G}$ is defined by
\[
\dr(\xi,\xi_0) + |k - k_0| < \epsilon
\]
which reduces, for $\epsilon < 1$, to
\[
B((\xi_0,k_0),\epsilon) = \{\xi \in B_{\wh{\RR}}(\xi_0,\epsilon) \, , \ k = k_0 \}
\]
hence proving (\ref{eq:characteristic}). As a consequence, for $\epsilon < 1$, if $\wh{f} = \one{B((\xi_0,k_0),\epsilon)}$ then $f$ belongs to $\Hil_\kappa$ if and only if $k_0 = \kappa$.
\end{proof}
\begin{lemma}\label{lem:GaborX}
The automorphisms (\ref{eq:Gaborautomorphisms}) have Property \textnormal{X}.
\end{lemma}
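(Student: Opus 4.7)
The plan is to exploit the very special form of the dual action $\dalpha_p(\xi,k) = (\xi - kp, k)$ together with the description of small $\dg$-balls in Lemma \ref{lem:GaborL}. The key observation is that $\dalpha_p$ fixes the slice $\{k = 0\}$ pointwise, which is precisely where small $\dg$-balls around the identity are supported; combined with $\delta(p) \equiv 1$, this will force the count appearing in Property X to be trivially bounded.

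\textbf{Step 1: identify $\Gamma^\perp$.} With $\Gamma = \Lambda \times \T$, a pair $(\lambda, k) \in \wh{\RR} \times \Z$ annihilates $\Gamma$ if and only if $\langle \lambda, q\rangle = 1$ for every $q \in \Lambda$ and $e^{2\pi i k\varphi} = 1$ for every $\varphi \in [0,1)$. Hence $\Gamma^\perp = \Lambda^\perp \times \{0\}$, and in particular $\Gamma^\perp$ lives entirely in the zero-frequency slice of $\wh{G}$.

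\textbf{Step 2: reduce the count to a count in $\wh{\RR}$.} For any $r < 1$, Lemma \ref{lem:GaborL} gives $B(\id, r) = B_{\wh{\RR}}(\id_{\wh{\RR}}, r) \times \{0\}$. Since $\dalpha_p(\xi, 0) = (\xi - 0 \cdot p, 0) = (\xi, 0)$, we obtain $\dalpha_p B(\id, r) = B(\id, r)$ identically in $p \in \PP$. Combining this with Step 1,
\[
\Gamma^\perp \cap \dalpha_p B(\id, r) = \big(\Lambda^\perp \cap B_{\wh{\RR}}(\id_{\wh{\RR}}, r)\big) \times \{0\}.
\]

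\textbf{Step 3: shrink $r$ and conclude.} Because $\Lambda^\perp$ is a discrete subgroup of $\wh{\RR}$, we can choose $r > 0$ so small that $\Lambda^\perp \cap B_{\wh{\RR}}(\id_{\wh{\RR}}, r) = \{\id_{\wh{\RR}}\}$. Then $\sharp\big(\Gamma^\perp \cap \dalpha_p B(\id, r)\big) = 1$ for every $p \in \PP$, and since $\delta(p) = 1$ the estimate (\ref{eq:PropertyX}) holds with any $M > 0$ and any $C > 0$.

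There is no real obstacle: the lemma reduces to the single geometric fact that modulations act as the identity on the zero-frequency slice, so the dependence on $p$ disappears at small scales. Should one prefer to invoke Lemma \ref{lem:counting} directly, the same conclusion follows uniformly in $p$ because $\nu_{\wh{G}}(\dalpha_p B(\id, 2r)) = \delta(p)\,\nu_{\wh{G}}(B(\id, 2r))$ is constant in $p$ (as $\delta \equiv 1$), and $\Omega_{\dalpha}^r$ is independent of $p$ for $r < 1$, so its measure provides a uniform lower bound in the denominator.
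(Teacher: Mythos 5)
Your proof is correct, and its core coincides with the paper's: both arguments rest on the observation that $\dalpha_p$ acts trivially on the slice $\{k=0\}$, so that $\dalpha_p B(\id,r)=B(\id,r)$ for all $r<1$ and the lattice count in (\ref{eq:PropertyX}) is independent of $p$; combined with $\delta\equiv 1$ this immediately yields Property X. The only divergence is the final bounding step. The paper keeps the full range $r<1$ and controls the $p$-independent count $\sharp\big(\Gamma^\perp\cap B(\id,r)\big)$ by invoking Lemma \ref{lem:counting} (via the quantity $\omega(r)$ of (\ref{eq:littleomega})), whereas you shrink $r$ further and use the discreteness of $\Lambda^\perp$ — equivalently of $\Gamma^\perp=\Lambda^\perp\times\{0\}$, an identification you correctly establish — to make the count exactly $1$. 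Your route is slightly more elementary, avoiding Lemma \ref{lem:counting} altogether, and shrinking $r$ is harmless by the Remark following Definition \ref{def:PropertyX} (Property X only asks for some $r>0$, and its use in Proposition \ref{prop:remainder} only requires $\epsilon$ small); the paper's version buys a concrete constant valid for every $r<1$, which is not actually needed downstream. Your closing remark that one could instead appeal to Lemma \ref{lem:counting} uniformly in $p$ is precisely what the paper does.
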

\begin{proof}
Since for any $\epsilon < 1$ we have
\[
\dalpha_p B((\xi_0,k_0),\epsilon) = B((\xi_0-k_0 p,k_0),\epsilon) \, ,
\]
then $\dalpha_p B(\id,\epsilon) = B(\id,\epsilon)$. Thus, since $\id = (\id_{\wh{\RR}},0)$, we get
\[
\sharp (\Gamma^\perp \cap \dalpha_p B(\id,\epsilon)) = \sharp \{\Gamma^\perp \cap B(\id,\epsilon)\} .
\]
Since this does not depend on $p$, and since in this situation $\delta = 1$, we then get the estimate (\ref{eq:PropertyX}) for all $p \in \PP$, for all radii $r < 1$.
% By Lemma \ref{lem:counting}, and using the notation (\ref{eq:littleomega}), we know that
% \[
% \sharp \{\Gamma^\perp \cap B(\id,\epsilon)\} \leq \frac{1}{\omega(\epsilon)}\nu_{\wh{G}}(B(\id,2\epsilon))
% \]
% Since $\delta = 1$ we then get the estimate (\ref{eq:PropertyX}) for all $p \in \PP$, with $\epsilon_0 = 1$. 
% % Moreover, as discussed in the proof of Theorem \ref{theo:expandingX}, by assumption (\ref{eq:Vitali}) we have that
% % \begin{displaymath}
% % \limsup_{\epsilon \to 0}\frac{\nu_{\wh{G}}(B(\id,2\epsilon))}{\nu_{\wh{G}}(B(\id,\epsilon))} \leq c < \infty \qedhere
% % \end{displaymath}
\end{proof}

This in turn provides a proof of the following result for Gabor-type frames, which was recently proved in \cite[Corollary 5.6]{JakobsenLemvigGabor} with completely different techniques, and generalizes several previous statements such as \cite[Prop. 4.1.4]{HeilWalnut1989}. Without loss of generality, we have fixed $\kappa = 1$.
\begin{theorem}\label{theo:GaborCalderon}
For $g \in L^2(\RR)$, let $\{\pi(p,q)g\,:\, p \in \PP \, , \ q \in \Lambda\}$ be a $\varsigma$-frame of $L^2(\RR)$ with constants 
$0 < A \leq B < \infty$. Then
\[
A \leq \int_\PP |\wh{g}(\xi - p)|^2 d\varsigma(p) \leq B \quad \textnormal{a.e.}\ \xi \in \wh{\RR} .
\]
\end{theorem}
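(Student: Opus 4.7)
The plan is to reduce Theorem \ref{theo:GaborCalderon} to the general affine-frame result of Theorem \ref{theo:main}, using the dictionary established in Section \ref{sec:Gabor}. First I would define $\psi \in \Hil_1 \subset L^2(G)$ by $\psi(x,e^{2\pi i \theta}) = g(x)e^{2\pi i \theta}$, so that its $\RR$-representative is $g$. By Lemma \ref{lem:gabortrick} with $\kappa = 1$, the hypothesis that $\{\pi(p,q)g\}$ is a $\varsigma$-frame of $L^2(\RR)$ with constants $A,B$ is equivalent to $\mathcal{A}_{\Gamma,\PP}(\psi)$ being a $\varsigma$-frame of $\Hil_1$ with the same constants, where $\Gamma = \Lambda \times \T$. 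Then I would invoke Lemma \ref{lem:GaborX} to obtain Property X for the family $\dalpha_p$.

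Next I would identify the two Calder\'on-type objects on the Gabor side. Since $\wh{\psi}(\xi,k) = \wh{g}(\xi)\,\delta_{k,1}$ and $\delta \equiv 1$, the formula (\ref{eq:dualdilations}) and the explicit form (\ref{eq:Gaborautomorphisms}) of $\dalpha_p$ give
\[
|\wh{D}(p)\wh{\psi}(\xi,k)|^2 = |\wh{\psi}(\xi - kp, k)|^2 = |\wh{g}(\xi - p)|^2 \, \delta_{k,1}.
\]
Consequently, restricting to the slice $k = 1$ (which by Lemma \ref{lem:GaborL} is precisely $\wh{\mathcal{G}}^{\,\Hil_1}_1$, isomorphic to $\wh{\RR}$), one has
\[
\mathcal{C}_\psi(\xi,1) = \int_\PP |\wh{g}(\xi-p)|^2 d\varsigma(p), \qquad \Psi_M(\xi,1) = \int_{H_M^c} |\wh{g}(\xi-p)|^2 d\varsigma(p).
\]

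The remaining ingredient required by Theorem \ref{theo:main} is the local integrability hypothesis (\ref{eq:localintegrability}). This is the only point that could a priori be delicate, but in this setting it comes for free: since the frame condition in particular gives a Bessel bound, Proposition \ref{prop:Bessel} applied to the affine system yields $\mathcal{C}_\psi \leq B$ a.e.\ on $\wh{\mathcal{G}}^{\,\Hil_1}_1$, and the trivial pointwise bound $\Psi_M \leq \mathcal{C}_\psi$ then forces $\Psi_M \in L^\infty \subset L^1_\textnormal{loc}$ on that slice (one may take $\mathcal{O} = \emptyset$). This is the analogue, in the Gabor setting, of the observation in Remark \ref{rem:nonexpandingGabor} that non-expansiveness is not an obstacle here.

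With Property X and local integrability in hand, Theorem \ref{theo:main} applies and gives $A \leq \mathcal{C}_\psi(\xi,1) \leq B$ for $\nu_{\wh{G}}$-a.e.\ $(\xi,1)$, which under the identification of $\wh{\mathcal{G}}^{\,\Hil_1}_1$ with $\wh{\RR}$ is exactly the claimed inequality. I expect the only subtlety of the proof to be the bookkeeping needed to unwind Fourier transforms on $G = \RR \times \T$ cleanly enough to read off the two displays above; once that identification is made, both the upper and lower bounds follow without any further computation.
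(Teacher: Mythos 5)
Your proposal is correct and follows essentially the same route as the paper: reduce to an affine frame of $\Hil_1$ via Lemma \ref{lem:gabortrick}, obtain Property X from Lemma \ref{lem:GaborX}, observe that since $\delta\equiv 1$ the local integrability of $\Psi_M$ is free from the Bessel bound on the Calder\'on sum, and then apply Theorem \ref{theo:main} together with the identification of $\wh{\mathcal{G}}^{\,\Hil_1}_1$ with $\wh{\RR}$ from Lemma \ref{lem:GaborL}. Your write-up is in fact more explicit than the paper's (which compresses the computation of $\wh{D}(p)\wh{\psi}$ and the slice identification into a single sentence), but there is no difference in substance.
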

\begin{proof}
By Lemma \ref{lem:gabortrick}, the hypothesis of having a Gabor frame of $L^2(\RR)$ is equivalent to having an affine frame of $\Hil_1$. Now, since $\delta = 1$, then the quantity we have called $\Psi_M$ coincides with the Calder\'on's sum, so its local integrability is a direct consequence of the Bessel inequality, and hence it is always verified for frames. Lemma \ref{lem:GaborX} allows us to use Theorem \ref{theo:main}, so the conclusion follows by Lemma \ref{lem:GaborL}.
\end{proof}

\newpage
\appendix

\section{More on the Expanding Property}\label{sec:more}

We discuss here some related issues concerning expanding automorphisms and the optimality of Lemma \ref{lem:counting}.

\subsection*{On the optimality of Lemma \ref{lem:counting}}
% In order to better understand the counting estimate of Lemma \ref{lem:counting}, l

Let us observe first that, for any fixed $\dalpha$, we can find a small enough $r_0$ such that $\nu_{\wh{G}}(\Omega_{\dalpha}^r) = \nu_{\wh{G}}(\dalpha B(\id,r))$. Thus, by Lemma \ref{lem:counting} and Remark \ref{rem:measureballdilations}
\[
\sharp (\Gamma^\perp \cap \dalpha B(\id,r))\leq \frac{\nu_{\wh{G}}(B(\id,2r))}{\nu_{\wh{G}}(B(\id,r))} \quad \forall r < r_0 .
\]
Note that the ratio on the right hand side is always larger than 1, but, by (\ref{eq:Vitali}), it will be bounded by the doubling constant.
On the other hand, when $\Gamma^\perp$ is a uniform lattice, $\Omega$ can be chosen to be compact and in this case, for any fixed $\dalpha$, we can find a large enough $r_1$ such that $\Omega_{\dalpha}^r = \Omega$ for all $r > r_1$. Thus, by Lemma \ref{lem:counting},
\[
\sharp (\Gamma^\perp \cap \dalpha B(\id,r))\leq c \nu_{\wh{G}}(\dalpha B(\id,2r)) \quad \forall r > r_1
\]
where $c = \frac{1}{\nu_{\wh{G}}(\Omega)}$.
% This means that the number of points of what we would call, in an Euclidean space, a full-rank lattice, that are contained in a set of the family $\{\dalpha B(\id,r)\}_{r > r_0}$ grows with the size of the set.
We now prove that a lower estimate also holds.
\begin{lemma}\label{lem:optimal}
Let $\wh{G}$ be a second countable LCA group with Haar measure $\nu_{\wh{G}}$, let $\Gamma^\perp$ be a discrete subgroup of $\wh{G}$, and let $\dalpha \in \textnormal{Aut}(\wh{G})$. Let also $\dg : \wh{G} \times \wh{G} \to \R^+$ be an invariant metric, and let
$B(\xi_0,r) = \{\xi \in \wh{G} \, : \, \dg(\xi,\xi_0)<r\}$ for $\xi_0 \in \wh{G}, r > 0$.
Then
\[
\sharp \Big(\Gamma^\perp \cap \dalpha B(\id,2r)\Big) \geq \frac{1}{\nu_{\wh{G}}(\Omega_{\dalpha}^r)}\nu_{\wh{G}}(\dalpha B(\id,r)) \quad \forall \ r > 0
\]
where $\Omega_{\dalpha}^r$ is as in (\ref{eq:neighborhood}) for any $\Omega \subset \wh{G}$ a $\nu_{\wh{G}}$-measurable section of $\wh{G}/\Gamma^\perp$.
\end{lemma}

\begin{proof}
By the same argument that leads to (\ref{eq:basiccounting}) we can also get
\[
\nu_{\wh{G}}(\dalpha(B(\id,r))) = \int_{\Omega_{\dalpha}^r} S_{\dalpha}^r(\xi) d\nu_{\wh{G}}(\xi) \leq \nu_{\wh{G}}(\Omega_{\dalpha}^r) \sup_{\xi \in \Omega_{\dalpha}^r} S_{\dalpha}^r(\xi) .
\]
The proof can then be concluded by showing that
\[
S_{\dalpha}^r(\xi) \leq S_{\dalpha}^{2r}(\id) \quad \forall \ \xi \in \Omega_{\dalpha}^{r}.
\]
To see this, let $\lambda_{\ol{\xi}}$ be such that $\ol{\xi} \in \dalpha B(\id,r) + \lambda_{\ol{\xi}}$. Then
\[
\big(\dalpha B(\id,r) - \ol{\xi}\big) \subset \big(\dalpha B(\id,2r) - \lambda_{\ol{\xi}}\big)
\]
because, for any $\zeta = \dalpha(\eta) \in \big(\dalpha B(\id,r) - \ol{\xi}\big)$ we have
\[
\dg(\eta,-\dalpha^{-1}(\lambda_{\ol{\xi}})) \leq \dg(\eta,-\dalpha^{-1}(\ol{\xi})) + \dg(\dalpha^{-1}(\ol{\xi}),\dalpha^{-1}(\lambda_{\ol{\xi}})) < 2r.
\]
The proof then follows because for all $\lambda \in \Gamma^\perp$ we have $S_{\dalpha}^{2r}(\lambda) = S_{\dalpha}^{2r}(\id)$.
\end{proof}

\subsection*{On the definition of expanding on a subspace}
In \cite{HLW}, with the correction discussed in \cite{GuoLabate}, a definition of a matrix in $\R^n$ that is expanding on a subspace is given, that allows to obtain a lattice counting estimate given by \cite[Lemma 5.11]{HLW} and \cite[Lemma 3.3]{GuoLabate}, discussed later on.\\
Their notion of expansiveness is as follows: given a non-zero linear subspace $F \subset \R^n$, a matrix $A \in GL_n(\R)$ is expanding on $F$ if there exists a linear subspace $E \subset \R^n$ such that
\begin{itemize}
\item[i.] $\R^n = F + E$ and $F \cap E = \{0\}$
\item[ii.] $A(F) = F$ and $A(E) = E$
\item[iii.] $\exists \ 0 < k \leq 1 < \gamma < \infty$ such that
$|A^j x| \geq k \gamma^j |x|$ \ $\forall j \geq 0$, $\forall x \in F$
\item[iv.] $\exists \ a > 0$ such that $|A^j x| \geq a |x|$ \ $\forall j \geq 0$, $\forall x \in E$ .
\end{itemize}
The classical definition of expanding matrix can be obtained in the special case of $E = \{0\}$, that is $F = \R^n$, and reduces to iii. or, equivalently, to saying that all eigenvalues $\lambda$ of $A$ are such that $ |\lambda|> 1$ (see also \S \ref{sec:examples}).

\begin{proposition}\label{prop:expandingsubspace}
Let $A \in GL_n(\R)$ be expanding on a subspace. Then it is expanding in the sense of Definition \ref{def:expanding}.
\end{proposition}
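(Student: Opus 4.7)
The plan is to verify Definition~\ref{def:expanding} for the family $\dalpha_j = A^j$ acting on $\wh{G} = \R^n$, by using the $A$-invariant direct sum decomposition $\R^n = F \oplus E$ supplied by the hypothesis together with the growth estimates (iii)--(iv). Concretely, I need to produce measurable Lipschitz constants $\ell,L : \Z \to \R^+$ and to show that, for every $N>0$, there exists $M>0$ such that $L(j) > M$ implies $\ell(j) > N$.

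\textbf{Step 1: adapted norm.} I would first replace, if convenient, the Euclidean metric by a translation-invariant metric on $\R^n$ adapted to the decomposition $F\oplus E$, for instance $\|x\| = \max(|P_F x|,|P_E x|)$ with $P_F,P_E$ the projections along the complementary subspace. Because $A$ leaves both $F$ and $E$ invariant by (ii), the action of $A^j$ with respect to this norm decouples: $\|A^j x\| = \max(|A^j P_F x|,|A^j P_E x|)$. This reduces the analysis of the Lipschitz constants to a separate study on each invariant summand.

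\textbf{Step 2: Lipschitz estimates on the two summands.} On $F$, condition (iii) gives $|A^j x|\ge k\gamma^j |x|$ for $j\ge 0$, which by invertibility of $A$ translates into $|A^{-|j|}x|\le k^{-1}\gamma^{-|j|}|x|$ on $F$. So $A^j$ is bi-Lipschitz on $F$ with constants that grow or decay geometrically with $j$. On $E$, condition (iv) gives $|A^j x|\ge a|x|$ for $j\ge 0$, hence $|A^{-j}x|\le a^{-1}|x|$ on $E$; iterating reverses these for $j<0$. Assembling, for each $j\in\Z$ I would define $\ell(j)$ and $L(j)$ from these separate estimates, taking into account the angle between $F$ and $E$ (which is fixed, hence contributes only a multiplicative constant).

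\textbf{Step 3: verifying the implication.} With these choices, $L(j)\to\infty$ precisely when $|j|\to\infty$, since the norm of $A^j$ is forced to grow through the expanding $F$-direction for $j\ge 0$ (by (iii)) and through the inverse action on $F$ for $j<0$. On each tail of $\{L(j)>M\}$, the same exponential mechanism that drives $L$ to infinity also drives $\ell$ to infinity: for $j\ge 0$ the expansion on $F$ also lower-bounds $\ell(j)$ after passing to the inverse, and for $j<0$ the symmetric argument applies. Choosing $M$ large enough to force $|j|$ large enough that $k\gamma^{|j|}>N$ (times any angle constant) yields the required implication.

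\textbf{Main obstacle.} The delicate point is that condition (iv) only guarantees a uniform non-contraction on $E$, not growth. Hence the optimal Euclidean singular-value choice of $\ell(j)$ need not diverge with $|j|$, and one must use the freedom to choose non-optimal measurable $\ell(j),L(j)$ compatible with the direct-sum structure---or equivalently the freedom to pick an adapted metric---so that the $F$-expansion drives the lower constant. Balancing this carefully, while still keeping $\ell$ and $L$ valid bi-Lipschitz constants on all of $\R^n$ (not just on $F$), is the heart of the argument.
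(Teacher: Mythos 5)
Your Step 3 contains the fatal gap, and your ``Main obstacle'' paragraph correctly locates it but then proposes a fix that cannot work. The lower Lipschitz constant is constrained from \emph{above}: any admissible $\ell(j)$ must satisfy $\ell(j)\,\dg(\xi,\id)\le \dg(\dalpha_j(\xi),\id)$ for all $\xi$, hence $\ell(j)\le \inf_{x\in E\setminus\{0\}}|A^jx|/|x|$. Condition iv.\ only forbids contraction on $E$; if, say, $A$ restricted to $E$ is the identity (e.g.\ $A=\mathrm{diag}(2,1)$ on $\R^2$ with $F=\R e_1$, $E=\R e_2$), then $\ell(j)\le 1$ for every $j$, and this persists for any invariant metric equivalent to the Euclidean one. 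There is no ``freedom to choose non-optimal $\ell$'' in the upward direction, and an adapted norm only changes things by a fixed multiplicative constant. So your claim that ``the same exponential mechanism that drives $L$ to infinity also drives $\ell$ to infinity'' is false whenever $E\neq\{0\}$, and the implication $L(j)>M\Rightarrow \ell(j)>N$ is simply unattainable for large $N$. (Incidentally, your assertion that $L(j)\to\infty$ as $j\to-\infty$ is also wrong: for $j<0$ the action contracts on $F$ and is bounded by $a^{-1}$ on $E$, so $L(j)$ stays bounded there; this is harmless, indeed helpful, since it forces $j$ to be large and positive whenever $L(j)>M$.)

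The resolution, which is what the paper's proof does, is to note that expansiveness is only ever used (in Theorem \ref{theo:expandingX}, and as restated in condition E of Appendix \ref{sec:more}) through the existence of a \emph{single} threshold $N_0$ with $H_M^c\subset K_{N_0}$ for all large $M$ --- not through $\ell$ diverging. The paper writes $x=x_F+x_E$, combines iii.\ and iv.\ to get $\ell(j)=\min\{k\gamma^j,a\}$, and takes $L(j)=\beta^j$ with $\beta>\|A\|$; then $L(j)>M$ forces $j$ to be a large positive integer, for which $\ell(j)=a$, so any $N_0<a$ works. Your Steps 1 and 2 (the decoupled estimates on the invariant summands) are fine and essentially match the paper's; the error is entirely in the quantifier you set out to verify and in the belief that the $F$-expansion can be made to drive the lower constant to infinity. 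You should instead prove the weaker, existential statement, which is the reading of Definition \ref{def:expanding} that the paper actually uses.
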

\begin{proof}
Since, by i., any $x \in \R^n$ can be written as $x = x_F + x_E$ with $x_F \in F$ and $x_E \in E$, points iii. and iv. imply that
\[
|A^j x| \geq k \gamma^j |x_F| + a |x_E| \geq \min\{k \gamma^j , a\} |x| \quad \forall j \geq 0
\]
for all $x \in \R^n$. Let us call $\ell(j) = \min\{k \gamma^j , a\}$. By \cite[Lemma 5.1]{HLW}, we also have that there exists $\beta > \|A\|$ such that, calling $L(j) = \beta^j$, we get
\[
\ell(j)|x| \leq |A^j x| \leq L(j) |x| \quad \forall x \in \R^n .
\]
Assume now, by contradiction, that for all $M_0, N_0 > 0$ there exists an $M > M_0$ such that $L(j) > M$ while $\ell(j) < N_0$. For a large value of $M_0$, this means that $j$ must be a large positive integer. But $\ell(j)$ can not be smaller than $a$, so any $N_0 < a$ provides a contradiction.
\end{proof}

We observe that the converse of Proposition \ref{prop:expandingsubspace} does not hold because, for example, a collection of rotations satisfies Definition \ref{def:expanding} but is not expanding on a subspace.

We also remark that in the present work we are not restricting ourselves to uniform (full-rank) lattices, but we consider the larger class of annihilators of cocompact subgroups (see also \cite{BownikRoss} for a thorough discussion of this point).

% \subsection*{On other lattice counting estimates}
% In \cite[Theorem 2.4]{BownikLemvig} the authors prove that matrices expanding on a subspace are the only ones satisfying the lattice counting estimate of \cite[Lemma 5.11]{HLW} for every full-rank lattice. In the equivalent formulation of \cite{BownikLemvig}, such estimate reads as follows: given $A \in GL_n\R$ such that $|\det A| > 1$ and a full-rank lattice $\Gamma \subset \R^n$, for all $r > 0$ there exists a $C > 0$ such that
% \begin{equation}\label{eq:latticecounting}
% \sharp \Big(\Gamma \cap A^j B(0,r)\Big) \leq 1 + C |\det A|^{-j} \quad \forall \ j < 0 .
% \end{equation}
% In the present work, we show that Property X is satisfied either by a more general definition of expanding, or by a class of automorphisms that is not expanding at all, see \S \ref{sec:Gabor}. This does not provide a contradiction, though, because what we have called Property X is weaker than condition (\ref{eq:latticecounting}). Indeed, we require the estimate to hold only for small radii, hence the appearance of $\epsilon_0$ in the definition of Property X. In particular, for the considered non-expanding case of Gabors, we have that small enough balls are not deformed at all, so in order to prove the counting estimate of Property X one can actually disregard the automorphisms, as we do for the proof of Lemma \ref{lem:GaborX}.

\subsection*{On other notions of expansiveness on LCA groups}
A lattice counting estimate similar to Property X is obtained in LCA groups, for uniform lattices and expanding automorphisms, in \cite[Lemma 4.11]{KutyniokLabate}. There, an automorphism $A \in \textnormal{Aut}(\wh{G})$ on an LCA group $\wh{G}$ is called expanding if
\[
\dg(A(\xi),\id) \geq c \,\dg(\xi,\id) \quad \forall \ \xi \neq \id \ , \ \ \textnormal{for some} \ c > 1.
\]
This generalizes classical expanding matrices, and it immediately implies that the family $\{A^j\}_{j \in \Z}$ is uniformly expanding according to Definition \ref{def:expanding}.
% This generalizes the classical notion of expanding matrix, and it is immediately seen that if $A$ is expanding according to this definition, then the family $\{A^j\}_{j \in \Z}$ is uniformly expanding according to Definition \ref{def:expanding}.

\vspace{1.5ex}
\noindent
We finally note that a notion of expansiveness also plays an important role in topological dynamical systems and ergodic theory, see e.g. \cite{KitchensSchmidt}. In this context, an action $\dalpha : H \to \textrm{Aut}(\wh{G})$ of a countable group $H$ on a locally compact group $\wh{G}$ is said to be expanding if there exists a neighborhood $U$ of $\id$ such that $\bigcap_{h \in H} \dalpha_h(U) = \{\id\}$.
By \cite[Theorem 7.3]{KitchensSchmidt}, if $\wh{G}$ is compact and connected, and it admits an expansive action for $H$ abelian and finitely generated, then $\wh{G}$ is abelian. However, in this setting, this notion of expansiveness is different from that of Definition \ref{def:expanding}, since it includes cases that we exclude such as that of Example \ref{ex:BAD}.

\newpage

\section{Proof of Lemma \ref{lem:Weil}}\label{sec:Weil}
For the sake of completeness, we provide here a proof of Lemma \ref{lem:Weil}.
\begin{proof}
Point $i.$ is a direct consequence of the definition of fundamental set, since $\displaystyle\nu_{\wh{G}}\Big(\wh{G} \smallsetminus \bigsqcup_{\lambda \in \Gamma^\perp} (\Omega + \lambda)\Big) = 0$. The issue of proving point $ii.$ consists of showing that the measures $\mu_\Gamma$ and $\nu_{\wh{G}}$ make the $\Gamma$-Fourier transform a unitary map from $L^2(\Gamma)$ to $L^2(\Omega)$. To see this, let us choose a Haar measure $\wh\theta$ on $\Omega$ such that the $\Gamma$-Fourier transform is unitary with $\mu_\Gamma$ on $\Gamma$, and let us choose a Haar measure $\wh{\theta}_0$ on $\Gamma^\perp$ such that the $\Gamma^\perp$-Fourier transform is unitary with $\kappa$ on $G/\Gamma$. By uniqueness of the Haar measure, there exist two positive constants $c$ and $c_0$ such that
\[
\wh{\theta} = c \, \cdot \, \nu_{\wh{G}} \ , \quad \wh{\theta}_0 = c_0 \, \cdot \, \textnormal{counting measure} .
\]
We only need to prove that $c = 1$. By \cite[(31.46), (c)]{HewittRossII}, we then have that
\[
\int_{\wh{G}} \phi(\xi) d\nu_{\wh{G}}(\xi) = c\,c_0\,\sum_{\lambda \in \Gamma^\perp} \int_\Omega \phi(\xi + \lambda) d\nu_{\wh{G}}(\xi) \qquad \forall \ \phi \in L^1(\wh{G}) .
\]
Now, since $i.$ holds, we get $c\,c_0 = 1$. Moreover, by definition of $\wh{\theta_0}$ we have
\[
\int_{G/\Gamma} |f(y)|^2 d\kappa(y) = c_0 \sum_{\lambda \in \Gamma^\perp} |\int_{G/\Gamma} f(y) \langle \lambda, y\rangle d\kappa|^2 .
\]
In the above identity we can choose $f \equiv 1$ because $G/\Gamma$ is compact, hence obtaining, by \cite[Lemma (23.19)]{HewittRossI},
\[
\kappa(G/\Gamma) = c_0 \sum_{\lambda \in \Gamma^\perp} |\int_{G/\Gamma} \langle \lambda, y\rangle d\kappa|^2 = c_0 \kappa(G/\Gamma)^2 .
\]
Thus, $c_0 = \kappa(G/\Gamma) = 1$, and since $c c_0 = 1$ we deduce $c = 1$ as wanted.
\end{proof}

\newpage

\vspace{1ex}
\noindent
D. Barbieri, Universidad Aut\'onoma de Madrid, 28049 Madrid, Spain.\\
davide.barbieri@uam.es

\vspace{1ex}
\noindent
E. Hern\'andez, Universidad Aut\'onoma de Madrid, 28049 Madrid, Spain.\\
eugenio.hernandez@uam.es

\vspace{1ex}
\noindent
A. Mayeli, City University of New York, Queensborough and the Graduate Center, New York, USA.\\
amayeli@gc.cuny.edu

\end{document}